\def\endproof{\hspace*{\fill}\mbox{\ \rule{.1in}{.1in}}\medskip }
\theoremstyle{plain}
\numberwithin{equation}{section} \numberwithin{figure}{section}
\newtheorem{theorem}{Theorem}[section]
\newtheorem{lemma}[theorem]{Lemma}
\begin{document}
\title[Convergence of equilibria for incompressible plates]
{Convergence of equilibria \\ for incompressible elastic plates \\
in the von K\'arm\'an regime}
\author{Marta Lewicka}
\address{Marta Lewicka,  University of Pittsburgh, Department of Mathematics,
301 Thackeray Hall, Pittsburgh, PA 15260, USA }
\email{lewicka@math.pitt.edu}
\author{Hui Li}
\address{Hui Li, Penn State University, 2030 Mary Ellen Lane, State College, PA 16803, USA}
\email{huili@math.umn.edu}

\begin{abstract}
We prove convergence of critical points $u^h$ of the nonlinear
elastic energies $E^h$ of thin incompressible plates
$\Omega^h=\Omega \times (-h/2, h/2)$, which satisfy the von
K\'arm\'an scaling: $E^h(u^h)\leq Ch^4$, to critical points of the
appropriate limiting (incompressible von K\'arm\'an) functional.
\end{abstract}

\maketitle
%\tableofcontents

\section{Introduction and the main result}
In this paper we prove convergence of critical points of the
nonlinear elastic energies on thin incompressible plates in the von
K\'arm\'an scaling regime, to critical points of the appropriate
limiting (incompressible von K\'arm\'an) functional.

\subsection{Elastic energy of thin incompressible plates}
Let $\Omega\subset \mathbb{R}^2$ be an open, bounded, simply
connected domain. For $h>0$, define $\Omega^h$ to be the $3$d plate
with the midplate $\Omega$ and thickness $h$:
\begin{equation*}\label{Omegah}
\Omega^h = \left\{x=(x', x_3);~ x'\in \Omega, ~x_3 \in
\left(-\frac{h}{2},
    \frac{h}{2}\right)\right\}.
\end{equation*}
The elastic energy of a deformation $u^h\in W^{1.2}(\Omega^h,
\mathbb{R}^3)$ of the homogeneous plate $\Omega^h$, scaled by its
unit thickness,  is given by:
\begin{equation}\label{ElasticE}
I^h(u^h) = \frac{1}{h} \int_{\Omega^h} W_{in}(\nabla u^h)~\mbox{d}x,
\end{equation}
while  the total energy, relative to the
 external force with the density $f^h \in L^2(\Omega^h,
\mathbb R^3)$, is:
\begin{equation}\label{TotalE}
J^h(u^h) = \frac{1}{h}\int_{\Omega^h}W_{in}(\nabla u^h)~ \mbox{d}x -
\frac{1}{h}\int_{\Omega^h}f^h \cdot u^h~ \mbox{d}x.
\end{equation}
The elastic energy density $W_{in}: \mathbb{R}^{3\times 3}
\rightarrow [0,\infty]$ in (\ref{ElasticE}) is assumed to be
infinite at compressible deformations:
$$ W_{in}(F) = \left\{
\begin{array}{ll}
W(F) & \mbox{ if}~~\det F = 1,\\
+\infty &\mbox{ otherwise}.
\end{array} \right. $$
The effective density $W: \mathbb R^{3 \times 3} \rightarrow
[0,\infty)$ above, which acts when $\det F= 1$, is required to
satisfy the following conditions:
\begin{itemize}
\item [(i)] {\em (frame invariance)} $W(RF) = W(F)$, for each proper
  rotation $R \in SO(3)$, and each $F \in \mathbb R^{3 \times 3}$.
\item [(ii)] {\em (normalisation)} $W(F) = 0$ for all  $F\in SO(3)$.
\item [(iii)] {\em (non-interpenetration)} $W(F) = + \infty$ if $\det
  F \leq 0$, and $W(F) \to + \infty$ as $\det F \to 0+$.
\item [(iv)] {\em (bound from below)} $W(F) \geq c ~\mbox{dist}^2(F,
  SO(3))$ with  a constant $c>0$  independent of $F$.
\item [(v)] {\em (bound from above)}
There exists  a constant $C>0$ such that for each $F$ with $\det
F>0$, i.e. for each $F\in \mathbb R_+^{3
  \times 3}$ there holds:
\begin{equation}\label{growth}
|DW(F)F^T| \leq C (W(F) + 1).
\end{equation}
\item [(vi)] {\em (regularity)} $W$ is of class $\mathcal C^1$ on $\mathbb R_+^{3 \times 3}$.
\item [(vii)] {\em (local regularity)} $W$ is of class $\mathcal C^2$
  in a small neighborhood of $SO(3)$.
\end{itemize}
The growth conditions in (iv) and (v) will be crucial in the present
analysis. Condition (iv) has been introduced in the context of
\cite{FJM} and it allows to use the nonlinear version of Korn's
inequality \cite{FJMgeo}, ultimately serving to control the local
deviations of the deformation $u^h$ from rigid motions, by the
elastic energy $I^h(u^h)$. Condition (v) has been introduced in
\cite{Ball1}  (see also \cite{Ball2}) in the context of inner
variations, in order to control the related strain in terms of the
energy. Both conditions are compatible with other requirements
above. Indeed, examples of  $W$ satisfying (i) -- (vii) are:
\begin{equation*}
\begin{split}
W_1(F) & = |(F^TF)^{1/2} - \mbox{Id}|^2 + |\log \det F|^q, \\
W_2(F) & =  |(F^TF)^{1/2} - \mbox{Id}|^2 + \left|\frac{1}{\det F} -
1\right|^q \mbox{ for } \det F>0,
\end{split}
\end{equation*}
where $q>1$ and $W$ equals $+\infty$ if $\det F\leq 0$ \cite{MS}.

\subsection{Notation}
Given a matrix $F\in \mathbb{R}^{n\times n}$, we denote its trace by
$\mbox{Tr}~ F$ and its transpose by $F^T$. The symmetric part of $F$
is given by $\mbox{sym}~ F = \frac{1}{2} (F + F^T)$. The cofactor of
$F$ is the matrix: $\mbox{cof}~ F$, where $[\mbox{cof } F]_{ij} =
(-1)^{i+j} \det \hat F_{ij}$ and each $\hat
F_{ij}\in\mathbb{R}^{(n-1)\times (n-1)}$ is obtained from $F$ by
deleting its $i$th row and $j$th column. The identity matrix is
denoted by $\mbox{Id}_n$.

In what follows, we shall use the matrix norm $|F|=
(\mbox{Tr}(F^TF))^{1/2}$, which is induced by the inner product:
$F_1:F_2 = \mbox{Tr}(F_1^TF_2)$. To avoid notational confusion, we
will often write $\langle F_1:F_2\rangle$ instead of $F_1:F_2$. In
general, $3\times 3$ matrices will be denoted by $F$ and $2\times 2$
matrices will be denoted by $F''$. Unless noted otherwise, $F''$ is
the principal $2\times 2$ minor of $F$.

Finally, by $\mathcal{C}^k_b(\mathbb{R}^n, \mathbb{R}^s)$ we denote
the space of continuous functions whose derivatives up to the order
$k$ are continuous and bounded in $\mathbb{R}^n$.

\subsection{The limiting energy}
The following $2$d energy functional has been rigorously derived in
\cite{LC} as the $\Gamma$-limit of the scaled incompressible
energies $h^{-4}I^h$ in (\ref{ElasticE}), when $h\to 0$:
\begin{equation}\label{VK}
\mathcal I(w, v) = \frac{1}{2}\int_{\Omega} \mathcal
Q_2^{in}\left(\mbox{sym}\nabla u + \frac{1}{2} \nabla v \otimes
  \nabla v\right)~\mbox{d}x + \frac{1}{24}\int_{\Omega}\mathcal
Q_2^{in}\left(\nabla^2 v\right)~\mbox{d}x,
\end{equation}
acting on couples $w \in W^{1,2}(\Omega,\mathbb{R}^2), v\in
W^{2,2}(\Omega,\mathbb{R})$. The fields $(w,v)$ may be identified as
the in-plane and the out-of-plane displacements, respectively.
Roughly speaking, any minimizing sequence of $h^{-4}J^h$, where
$f^h(x)  \approx h^3 f(x') e_3$ and $ \int_\Omega f = 0$,  will have
the structure:
$$u^h_{|\Omega} \approx (\bar R)^T\left(\mbox{id} + hv e_3 + h^2
  w\right) - c^h$$
asymptotically as $h\to 0$, with $(w,v)$ as above and $\bar R\in
SO(3)$ maximizing  $\int_\Omega f(x') e_3\cdot Rx' ~\mbox{d}x'$
among all rotations $R$, while  $c^h \in \mathbb R^3$ are constant
translation vectors.  Moreover, $(w,v,\bar R)$ minimize the
following total limiting energy:
$$\mathcal{J}(w, v,\bar R) = \mathcal{I}(w, v) - \bar R_{33}\int_\Omega fv.$$
A precise formulation of the statements above can be found in
\cite{LeMoPa}.
%Then $\inf \{\frac{1}{h^4}  J^h\} \in (-C, 0)$. Moreover, for
%any minimizing sequence of $\tilde J^h$

The energy in (\ref{VK}) is the incompressible version of the von
K\'arm\'an functional, which has been derived (for compressible
case, i.e. without the assumption that $\det\nabla u^h = 1$) by
means of $\Gamma$-convergence in \cite{FJM}. The quadratic forms
$\mathcal{Q}_2^{in}$ differ from the standard $\mathcal{Q}_2$ in
\cite{FJM} in as much as minimization in (\ref{Q2}) below is taken
over the out-of-plane stretches which preserve the incompressibility
constraint. Namely, $\mathcal Q_2^{in} $ in (\ref{VK}) are given as:
\begin{equation}\label{Q2}
\begin{split}
&\forall F''\in \mathbb{R}^{2\times 2} \quad \mathcal Q_2^{in}(F'')
= \min_{d \in \mathbb R^3} \Big\{ Q_3(F'' + d \otimes e_3 + e_3
\otimes d);~ \mbox{Tr}(F'' + d \otimes e_3 + e_3
\otimes d) = 0\Big\}, \\
&\forall F\in \mathbb{R}^{3\times 3} \quad \mathcal Q_3(F) = D^2
W(\mbox{Id})(F, F).
\end{split}
\end{equation}
Both forms $\mathcal Q$ above are positive semidefinite, and
strictly positive definite on symmetric matrices. We also introduce
the linear operators $\mathcal L_2^{in}:\mathbb{R}^{2\times
2}\rightarrow \mathbb{R}^{2\times 2}$ and $\mathcal L_3:
\mathbb{R}^{3\times 3}\rightarrow \mathbb{R}^{3\times 3}$ such that:
\begin{equation}\label{Ls}
\begin{split}
& \forall F''\in \mathbb{R}^{2\times 2} \quad
\langle\mathcal{L}_2^{in}(F'') : F'' \rangle = \mathcal{Q}_2^{in}(F''),\\
&\forall F\in \mathbb{R}^{3\times 3} \quad  \langle\mathcal{L}_3(F)
: F\rangle = \mathcal{Q}_3(F).
\end{split}
\end{equation}
Note that symmetric operators $\mathcal{L}$ are uniquely given by:
$\langle\mathcal{L}(F_1):F_2\rangle =
\frac{1}{4}\left(\mathcal{Q}(F_1+F_2)  - \mathcal{Q}(F_1-F_2)\right)$.\\

\subsection{Critical points and the incompressible inner variations}

Following \cite{Ball2}, we now define the critical points $u^h$ of
the functionals $J^h$ in (\ref{TotalE}) with respect to inner
variations, that is requesting that the derivative of $J^h$ at an
incompressible equilibrium $u^h$ be zero:
$$\frac{\mathrm{d}}{\mathrm{d}\epsilon} _{|\epsilon=0}
J^h(u^h_\epsilon) = 0,$$ along all curves $\epsilon\mapsto
u^h_\epsilon$ of incompressible  deformations of $\Omega^h$ having
the form: $u^h_\epsilon (x) = \Phi(\epsilon, u^h(x))$, with  $u^h_0
= u^h$ at $\epsilon = 0$. This requirement is translated into the
following condition:
\begin{equation}\label{EqEqn}
\int_{\Omega^h}  \left\langle DW(\nabla u^h)(\nabla u^h)^T:\nabla
\phi(u^h(x))\right\rangle~\mbox{d}x = \int_{\Omega^h}f^h \cdot
\phi(u^h)~\mbox{d}x, \quad \forall \phi \in \mathcal{C}_b^1(\mathbb
R^3,\mathbb R^3) \mbox{ with } \mbox{div} ~\phi = 0.
\end{equation}
We refer to section \ref{sec_incomp} for the derivation and
discussion of (\ref{EqEqn}). Let us only note now that the
incompressible inner variations:
\begin{equation*}\label{expansionUe}
u^h_{\epsilon}(x) = \Phi(\epsilon, u^h(x)) = u^h(x) + \epsilon
\phi(u^h(x)) + \mathcal O(\epsilon^2).
\end{equation*}
replace the classical variations $u^h_{\epsilon}(x) = u^h(x) +
\epsilon w^h(x)$ used in definition of minimizers of $J^h$, and also
they replace the inner variations $u^h_{\epsilon}(x) = u^h(x) +
\epsilon \phi(u^h(x))$ considered in \cite{Ball2} and \cite{MS} in
the compressible case.

\subsection{The main result}
The following is our main result:

\begin{theorem}\label{thmain}
For each $h<<1$, let $u^h\in W^{1,2}(\Omega^h,\mathbb{R}^3)$ be a
critical point of $J^h$, i.e. it satisfies (\ref{EqEqn}) subject to
the external forces $f^h(x) = h^3 f(x')e_3$. Assume that:
\begin{equation}\label{en_as}
I^h(u^h) \leq Ch^4,
\end{equation}
for a constant $C>0$ independent of $h$. Then there exists a
sequence of proper rotations $\bar R^h \in SO(3)$, and translations
$c^h \in \mathbb R^3$, such that for the renormalized deformations:
\begin{equation}\label{yh}
y^h(x', x_3) = (\bar R^h)^T u^h(x', hx_3) - c^h \in
W^{1,2}(\Omega^1,\mathbb{R}^3),
\end{equation}
the following convergences hold, up to a subsequence in $h$, as
$h\to 0$:
\begin{itemize}
\item [(i)] $\bar R^h \to \bar R = [\bar R_{ij}]_{i,j:1..3}\in SO(3)$.
\item [(ii)] $y^h \to x'$ in $W^{1, 2}(\Omega^1)$.
\item [(iii)] For the scaled out-of-plane displacements:
\begin{equation}\label{Sc_Nor_Dis}
v^h(x') = \frac{1}{h}\int_{-1/2}^{1/2} y_3^h(x', x_3)
~\mathrm{d}x_3,
\end{equation}
there exists $v \in W^{2, 2}(\Omega, \mathbb{R})$ such that $v^h \to
v$ strongly in $W^{1, 2}(\Omega)$.
\item [(iv)] For the scaled in-plane displacements:
\begin{equation}\label{Sc_Tan_Dis}
w^h(x') = \frac{1}{h^2}\int_{-1/2}^{1/2}\left((y^h)'(x', x_3) -
x'\right)~\mathrm{d}x_3
\end{equation}
there exists $w \in W^{1, 2}(\Omega, \mathbb R^2)$ such that
$w^h\rightharpoonup w $ weakly in $W^{1, 2}(\Omega, \mathbb R^2)$.
\item [(v)] The limiting displacements $(w, v)$ solve the following
  Euler-Lagrange equations of the functional (\ref{VK}), expressed in
  the variational form:
\begin{equation}\label{EL1}
\int_{\Omega}\left\langle\mathcal L_2^{in}\left(\mathrm{sym} \nabla
w +
  \frac{1}{2}\nabla v \otimes \nabla v\right) : \nabla \tilde w \right\rangle
~\mathrm{d}x' = 0
\end{equation}
\begin{equation}\label{EL2}
\begin{split}
&\int_{\Omega}\left\langle\mathcal L_2^{in}\left(\mathrm{sym}\nabla
w +
    \frac{1}{2} \nabla v \otimes \nabla v\right) : (\nabla v \otimes
  \nabla \tilde v)\right\rangle ~\mathrm{d}x' \\
&\qquad\qquad\qquad \qquad\qquad\qquad
  + \frac{1}{12} \int_{\Omega}\left\langle\mathcal L_2^{in}(\nabla^2 v) : \nabla^2
  \tilde v\right\rangle ~\mathrm{d}x' = \bar R_{33} \int_{\Omega}
f\tilde v ~\mathrm{d}x',
\end{split}
\end{equation}
for every $\tilde w \in W^{1,2}(\Omega, \mathbb R^2)$ and every
$\tilde v \in W^{2,2}(\Omega,\mathbb{R})$.
\end{itemize}
\end{theorem}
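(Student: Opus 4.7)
The plan is to follow the scheme used for critical points of compressible thin plates (e.g.\ \cite{MS}), adapted to the incompressibility constraint as in \cite{LC} and to Ball's inner variations formalism. The argument splits into a compactness part yielding items (i)--(iv), and a passage to the limit in the inner-variation identity (\ref{EqEqn}) yielding (v).

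The compactness part is essentially standard at the von K\'arm\'an scaling. Combining the Friesecke--James--M\"uller rigidity estimate with hypothesis (iv) and the energy bound (\ref{en_as}) produces a rotation $\bar R^h \in SO(3)$ such that $\int_{\Omega^h}|\nabla u^h - \bar R^h|^2 \leq Ch^5$. The rescaled deformation $y^h$ then satisfies $\|\nabla_h y^h - \mathrm{Id}\|_{L^2(\Omega^1)}^2 \leq Ch^4$, with $\nabla_h = (\partial_1, \partial_2, h^{-1}\partial_3)$; standard Poincar\'e and Korn estimates on the averaged in-plane and out-of-plane displacements give (i)--(iv), following \cite{LC, FJM}. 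In the process, one identifies the weak $L^2$-limit $G$ of the scaled strain $G^h = \frac{1}{h^2}[(\bar R^h)^T \nabla u^h(\cdot, h\cdot) - \mathrm{Id}]$: its principal $2\times 2$ block is $\mathrm{sym}\,\nabla w + \frac{1}{2}\nabla v\otimes\nabla v + x_3\nabla^2 v$ modulo antisymmetric part, while the third column is determined by the linearized incompressibility $\mathrm{tr}\,G = 0$ together with the variational characterization in (\ref{Q2}).

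For item (v), I would perform the change of variables $x_3 \mapsto hz_3$ in (\ref{EqEqn}) and Taylor-expand $DW(\mathrm{Id} + h^2 G^h)(\mathrm{Id} + h^2 G^h)^T$ about the identity: on the good set $\{h^2|G^h|\ll 1\}$, the $\mathcal{C}^2$-regularity (vii) gives the leading term $h^2 \mathcal{L}_3(G^h) + o(h^2)$ up to a conjugation by $\bar R^h$ coming from frame invariance; on the complementary bad set, the integrand is dominated by $h^2(W+1)$ via (v), and its contribution vanishes by Chebyshev and (\ref{en_as}). Passing to the limit then requires a careful choice of divergence-free test functions. For (\ref{EL1}) I would take $\phi^h$ of order $h^2$ whose in-plane part approximates $\bar R \tilde w$ after composition with $u^h$, together with a compensating $x_3$-linear out-of-plane correction whose derivative yields $-\mathrm{div}\,\tilde w$ in the $(3,3)$-entry of $(\nabla\phi^h)(u^h)$; for (\ref{EL2}) the leading ansatz $h\tilde v\,\bar R e_3$ in the out-of-plane direction is automatically divergence-free since $\tilde v$ is tangential, with an $O(h^2)$ correction added to generate the bending pairing. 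The force side $f^h \cdot \phi^h = h^3 f e_3 \cdot \phi^h$ reproduces the $\bar R_{33} \int f\tilde v$ on (\ref{EL2}) via the scalar $(\bar R e_3)_3 = \bar R_{33}$, and vanishes in the limit for the in-plane test.

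The main obstacle is the conversion of the limiting $\mathcal{L}_3$-pairings into the $\mathcal{L}_2^{in}$-pairings appearing in (\ref{EL1})--(\ref{EL2}). The naive leading-order limit involves $[\mathcal{L}_3(G)]_{ij}$ entries with $j = 3$, which are absent in the target equations; the crucial cancellation is that the third column of $G$ is chosen as the minimizer in the problem (\ref{Q2}) defining $\mathcal{Q}_2^{in}$, so that the first-order optimality conditions force $[\mathcal{L}_3(G)]_{i3} = 0$ for $i = 1, 2$, eliminating the spurious entries. The $(3,3)$-entry then combines with the $2\times 2$ block to reproduce $\mathcal{L}_2^{in}$ via the polarization identity, $z_3$-averaging eliminates the linear-in-$z_3$ bending contribution from (\ref{EL1}), and generates the $\frac{1}{12} = \int_{-1/2}^{1/2} z_3^2\,\mathrm{d}z_3$ factor in (\ref{EL2}). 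Verifying these reductions rigorously, handling the bad-set contribution under the weaker growth condition (v), and confirming the strong convergence of $(\nabla\phi^h)(u^h)$ in the topology needed, constitute the bulk of the work.
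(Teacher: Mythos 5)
Your overall strategy --- compactness via the rigidity estimate, passage to the limit in the inner-variation identity with divergence-free test functions, and reduction of the $\mathcal{L}_3$-pairings to $\mathcal{L}_2^{in}$-pairings --- is the same as the paper's, and you correctly identify the key cancellation $[\mathcal{L}_3(G)]_{13}=[\mathcal{L}_3(G)]_{23}=0$ as the crux. But your justification of that cancellation is circular: $G$ is the weak limit of the strains $G^h$ of the \emph{given} equilibria, so its third column is not ``chosen as the minimizer'' in (\ref{Q2}); that the relaxation to the minimizer actually occurs must be proved, and for critical points (unlike for minimizers, where it comes from the $\Gamma$-limit construction) it is itself a consequence of the equilibrium equations. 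The paper does this in two independent steps: $\mathrm{Tr}\,G=0$ a.e.\ follows from expanding the exact constraint $\det(\mathrm{Id}+h^2G^h)=1$ into $\mathrm{Tr}\,G^h+h^2\mathrm{Tr}\,\mathrm{cof}\,G^h+h^4\det G^h=0$ and killing the higher-order terms on the good/bad sets $B_h$ (Lemma \ref{TrG=0}); and $E_{13}=E_{23}=0$ is extracted from the equilibrium identity (\ref{Eq2}) with a dedicated family of test functions (Lemma \ref{E12=0}). Only then does Lemma \ref{lemma4.1} yield $\mathcal{L}_2^{in}(G'')=E''-E_{33}\mathrm{Id}_2$, including the $-E_{33}\mathrm{Id}_2$ correction that your sketch attributes only vaguely to polarization. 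Note also that linearized incompressibility $\mathrm{Tr}\,G=0$ pins down only $G_{33}$, not $G_{13},G_{23}$, so it cannot substitute for the second step.

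The second gap is technical but essential. The admissible variations in (\ref{EqEqn}) are fields $\phi\in\mathcal{C}^1_b(\mathbb{R}^3,\mathbb{R}^3)$ on the \emph{target} space composed with $u^h$, so they must be globally bounded and divergence-free on all of $\mathbb{R}^3$; the natural candidates needed to produce the $x_3$-moment and the factor $\frac1{12}$ are linear or quadratic in the vertical coordinate, hence unbounded. The paper therefore truncates via $\theta^h(x_3/h)$ at height $\omega_h\to\infty$, with rates tuned against $h$ as in (\ref{omegah}), (\ref{omegah2}), (\ref{omegah3}), and controls the measure of $\{|y^h_3|/h\geq\omega_h\}$ through (\ref{convi2}); your sketch omits this entirely. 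Relatedly, for (\ref{EL2}) the quantity $\frac1h\int\langle E^h_{13,23},\cdot\rangle$ has no identifiable limit on its own: the paper's device is to derive two identities (Lemmas \ref{lem7.1} and \ref{lem7.2}) in which this unidentified limit appears identically and cancels upon combining them with $\eta=\nabla\xi$, $\eta_3=\xi$, together with an application of the already-established (\ref{EL1}) with test field $v\nabla\xi$. Without these mechanisms --- which constitute the bulk of the proof of item (v) --- the passage to the limit you describe cannot be carried out.
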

We note that (\ref{en_as}) are automatically satisfied by any
minimizing sequence of $u^h$ of the total energy $J^h$, under the
assumption that $f^h(x) = h^3 f(x')e_3$ \cite{FJM}.  Also,
(\ref{EqEqn}) holds for every minimum of $J^h$ (see Theorem
\ref{deriv}), and the assertions (i) - (v) are then a direct
consequence \cite{LC} of the fact that $\frac{1}{h^4}J^h$
$\Gamma$-converges to $\mathcal{J}$. In general,
$\Gamma$-convergence does not assure that a limit of a sequence of
equlibria is an equilibrium of the $\Gamma$-limit. In the present
situation, this turns out to be the case.

\subsection{Relation to other works}
Our work is largely inspired by \cite{MS} and \cite{LC}. To put it
in a larger perspective, recall that one of the fundamental
questions in the mathematical theory of elasticity has been to
rigorously justify various 2d plate models present in the
engineering literature, in relation to the three-dimensional theory.
This goal has been largely accomplished in \cite{FJM}, where a
hierarchy of limiting 2d energies has been derived; the distinct
theories are differentiated by their validity in the corresponding
scaling regimes $h^\beta,$ $\beta\geq 2$, i.e. in presence of
assumption (\ref{en_as}) where $h^4$ is replaced by $h^\beta$.

Under the additional incompressibility constraint, the works
\cite{5,
  6} proved compactness properties and the
$\Gamma$-convergence of the functionals $\frac{1}{h^\beta} I^h$ as
in (\ref{ElasticE}), for the so-called Kirchhoff scaling $\beta =
2$, while \cite{LC} treated the case $\beta=4$ including as well a
more complex case of shells when the midsurface $\Omega$ is a
generic 2d hypersurface in $\mathbb{R}^3$. In view of the
fundamental property of $\Gamma$-convergence, it follows that the
global almost-minimizers of the energies (\ref{TotalE}) converge to
the minimizers of the limiting energy (given by (\ref{VK}) in the
von K\'arm\'an regime).

Regarding convergence of stationary points for thin plates,  the
first result has been obtained in \cite{MP} under the von K\'arm\'an
scaling  $\beta = 4$ (see also \cite{L1} for an extension to thin
shells). These results relied on the crucial assumption that the
elastic energy density $W$ is differentiable everywhere and its
derivative satisfies a linear growth condition: $|DW(F)|\leq C(|F| +
1)$. This assumption is contradictory with the physically expected
non-interpenetration condition, and subsequently it has been removed
in \cite{MS} and  exchanged with Ball's condition (\ref{growth}),
while  the equilibrium equations have been rephrased in terms of the
inner variations. In the present paper we follow the same approach;
indeed the concept of inner variations comes up naturally in the
context of incompressible elasticity.

\medskip

To conclude, we now comment on the isotropic case. For an isotropic
energy density $W$ with the Lam\'e constants $\lambda$  and $\mu$,
the Euler-Lagrange equations (\ref{EL1}) -- (\ref{EL2}) of
(\ref{VK}) are:
\begin{equation}\label{EL}
\frac{\mu}{3} \Delta^2 v = [v,\Phi], \qquad \Delta^2\Phi =
-\frac{3\mu}{2} [v,v],
\end{equation}
where $v$ is the out-of-plane displacement, while the in-plane
displacement $w$ can be recovered through the Airy stress potential
$\Phi$, by means of:
$$\mbox{cof}\nabla^2 \Phi =
2\mu\Big[\mbox{sym} \nabla w + \frac{1}{2}\nabla v\otimes \nabla v +
\Big(\mbox{div} w + \frac{1}{2} |\nabla v|^2\Big)\mbox{Id}\Big].$$
The Airy's bracket $[\cdot, \cdot]$ is defined as: $[v,\Phi] =
\nabla^2v : (\mbox{cof}\nabla^2\Phi)$. As expected, the system
(\ref{EL}) can be now obtained as the incompressible limit, i.e.
when passing with the Poisson ratio $\nu\to \frac{1}{2}$, of the
classical (compressible) von K\'arm\'an system:
\begin{equation*}\label{EL_compre}
B\Delta^2 v = [v,\Phi], \qquad \Delta^2\Phi = -\frac{S}{2} [v,v],
\end{equation*}
where $S=2\mu(1+\nu)$ is Young's modulus,
$\nu=\frac{\lambda}{2(\mu+\lambda)}$ is the Poisson ratio, and
$B=\frac{S}{12(1-\nu^2)}$ is bending stiffness. By the change of
variable $\Phi = 2\mu\Phi_1$ one can eliminate the parameter $\mu$
entirely and write (\ref{EL}) in its equivalent form:
\begin{equation*}
\Delta^2 v = 6[v,\Phi_{1}], \qquad \Delta^2\Phi_1 = -\frac{3}{4}
[v,v].
\end{equation*}

\bigskip

\noindent{\bf Acknowledgments.} M.L. was partially supported by the
NSF Career grant DMS-0846996 and by the Polish MN grant N N201
547438.

\section{Incompressible inner variations and critical points}\label{sec_incomp}

Following \cite{Ball2}, we want to define the critical points $u^h$
of the functionals $J^h$ in (\ref{TotalE}) by taking inner
variations. That is, we request that the derivative of $J^h$ at an
incompressible equilibrium $u^h$ be zero along all curves
$\epsilon\mapsto u^h_\epsilon$ of incompressible  deformations of
$\Omega^h$ having the form: $u^h_\epsilon (x) = \Phi(\epsilon,
u^h(x))$, with  $u^h_0 = u^h$ at $\epsilon = 0$. This requirement
imposes the following conditions on the flow
$\Phi:[0,\epsilon_0)\times\mathbb{R}^3\rightarrow \mathbb{R}^3$:
\begin{equation}\label{16}
\begin{split}
& \forall \epsilon \quad \Phi(\epsilon, \cdot) \mbox{ is
  incompressible, i.e } \quad \forall y\in\mathbb{R}^3\quad \det\nabla\Phi(\epsilon, y) = 1,\\
& \forall y\in\mathbb{R}^3\quad \Phi(0,y) = y.
\end{split}
\end{equation}
Assuming sufficient smoothness of $\Phi$, the above immediately
implies:
\begin{equation*}
\begin{split}
0 & = \frac{\mathrm{d}}{\mathrm{d}\epsilon} \det \nabla \Phi(0, y) =
\left\langle\mbox{cof}\nabla\Phi(0,y) :
\frac{\mathrm{d}}{\mathrm{d}\epsilon} \nabla\Phi(0,y) \right\rangle
= \left\langle\mbox{Id} :
\frac{\mathrm{d}}{\mathrm{d}\epsilon} \nabla\Phi(0,y) \right\rangle\\
& = \mbox{Tr} \left( \frac{\mathrm{d}}{\mathrm{d}\epsilon}
  \nabla\Phi(0,y)\right)
= \mbox{div} \left(\frac{\mathrm{d}}{\mathrm{d}\epsilon}
  \Phi(0,y)\right) =: \mbox{div}~\phi(y).
\end{split}
\end{equation*}
On the other hand, any divergence-free vector field $\phi$ generates
a path of incompressible deformations. We recall this standard fact
below, for the sake of completeness.

\begin{lemma}\label{Inner_vari}
Let $\phi\in \mathcal{C}^1_b(\mathbb{R}^n, \mathbb{R}^n)$ such that
$\mathrm{div}~ \phi = 0$. Consider the ODE:
\begin{equation}\label{ODE}
\left\{
\begin{array}{ll}
u'(\epsilon) = \phi(u(\epsilon)),\\
u(0) = y.
\end{array} \right.
\end{equation}
and denote its flow by $\Phi(\epsilon, y) = u(\epsilon)$ solving
(\ref{ODE}). Then $\Phi$ satisfies (\ref{16}).
\end{lemma}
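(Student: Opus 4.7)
The plan is to verify the two conditions of (\ref{16}) separately. The condition $\Phi(0,y)=y$ is immediate from the initial condition in (\ref{ODE}), so the entire content of the lemma lies in showing $\det \nabla_y \Phi(\epsilon,y) \equiv 1$.

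First I would set up the flow: since $\phi \in \mathcal{C}^1_b$, standard ODE theory (Picard--Lindelöf plus the boundedness of $\phi$) gives a unique global-in-$\epsilon$ solution $\Phi(\epsilon,y)$, smoothly depending on $y$, so that $A(\epsilon,y) := \nabla_y \Phi(\epsilon,y)$ is well-defined, continuous in $\epsilon$, and satisfies $A(0,y)=\mathrm{Id}_n$. Differentiating the ODE in $y$ gives the variational equation
\begin{equation*}
\frac{\mathrm{d}}{\mathrm{d}\epsilon} A(\epsilon,y) = \nabla\phi\bigl(\Phi(\epsilon,y)\bigr)\, A(\epsilon,y),\qquad A(0,y)=\mathrm{Id}_n.
\end{equation*}

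Next I would compute $\frac{\mathrm{d}}{\mathrm{d}\epsilon}\det A(\epsilon,y)$ using Jacobi's formula $\frac{\mathrm{d}}{\mathrm{d}\epsilon}\det A = \langle \mathrm{cof}\,A : \frac{\mathrm{d}A}{\mathrm{d}\epsilon}\rangle$. Substituting the variational equation and using $\mathrm{cof}(A)^T A = (\det A)\,\mathrm{Id}_n$, one obtains
\begin{equation*}
\frac{\mathrm{d}}{\mathrm{d}\epsilon}\det A(\epsilon,y) = (\det A(\epsilon,y))\,\mathrm{Tr}\bigl(\nabla\phi(\Phi(\epsilon,y))\bigr) = (\det A(\epsilon,y))\,\mathrm{div}\,\phi(\Phi(\epsilon,y)) = 0,
\end{equation*}
where the last equality uses the hypothesis $\mathrm{div}\,\phi=0$. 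Since $\det A(0,y)=1$, Gronwall (or simple integration of the scalar ODE above) yields $\det A(\epsilon,y)=1$ for all $\epsilon$ and all $y$.

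There is no real obstacle here: the argument is the classical Liouville identity for volume-preserving flows, and all regularity needed is supplied by $\phi \in \mathcal{C}^1_b$. The only small care point is to justify the application of Jacobi's formula on the nose, which is harmless because $A(\epsilon,y)$ stays invertible (as the differential of a diffeomorphism) and the chain-rule computation is smooth. An alternative presentation would be to argue by the change-of-variables formula applied to $\Phi(\epsilon,\cdot)$ on an arbitrary test ball and differentiate in $\epsilon$, again reducing to $\mathrm{div}\,\phi=0$; I would favor the direct Jacobi/Liouville computation for brevity.
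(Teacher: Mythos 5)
Your proof is correct and is essentially the same argument as the paper's: both reduce to Jacobi's formula for $\frac{\mathrm{d}}{\mathrm{d}\epsilon}\det\nabla\Phi$ and conclude from $\mathrm{div}\,\phi=0$ that this derivative vanishes, so $\det\nabla\Phi\equiv\det\nabla\Phi(0,\cdot)=1$. The only (cosmetic) difference is that you differentiate the variational equation and invoke the cofactor identity for a general invertible $A$, whereas the paper uses the semigroup property $\Phi(\epsilon+\delta,y)=\Phi(\delta,\Phi(\epsilon,y))$ so that the cofactor only needs to be evaluated at $\nabla\Phi(0,\cdot)=\mathrm{Id}$.
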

\begin{proof}
Let $\epsilon, \delta > 0$ and note that: $ \Phi(\epsilon+\delta, y)
= \Phi(\delta, \Phi(\epsilon, y)) = \Phi(\delta, y_1)$ where we put
$y_1 = \Phi(\epsilon, y)$. Hence, denoting the spacial gradient by
$\nabla$, we obtain:
\begin{equation*}
\det \nabla \Phi(\epsilon+\delta, y) = \det \nabla \Phi(\delta, y_1)
\det \nabla \Phi(\epsilon, y),
\end{equation*}
Consequently:
\begin{equation}\label{Mainsth}
\begin{split}
\frac{\mbox{d}}{\mbox{d}\epsilon}\left(\det \nabla
  \Phi(\epsilon+\delta, y)\right) & =
\frac{\mbox{d}}{\mbox{d}\delta}\left(\det \nabla
\Phi(\epsilon+\delta, y)\right)
 = \frac{\mbox{d}}{\mbox{d}\delta}\left(\det \nabla\Phi(\delta, y_1)\right)\left(\det
  \nabla\Phi(\epsilon, y)\right) \\ & =\left\langle\mbox{cof}~ \nabla \Phi(\delta,
  y_1) : \frac{\mbox{d}}{\mbox{d}\delta} \nabla \Phi
  (\delta,y_1)\right\rangle \det \nabla \Phi(\epsilon, y).
\end{split}
\end{equation}
Above, we used the formula for the derivative of the determinant of
a matrix function $A(t)$, namely: $(\det A(t))' = \mbox{cof} A(t):
A(t)'. $ For $\delta = 0$, (\ref{Mainsth}) implies:
$$ \frac{\mbox{d}}{\mbox{d}\epsilon}\left(\det \nabla
  \Phi(\epsilon, y)\right) = \langle \mbox{cof} \nabla \Phi(0, y_1) : \nabla \phi(y_1)\rangle =
\langle \mbox{Id} : \nabla \phi(y_1)\rangle = \mbox{Tr} \nabla \phi
= \mbox{div}~ \phi = 0. $$ But  $\det \nabla \Phi(0, y) = \det
\mbox{Id}_n = 1$, which achieves the claim.
\end{proof}

We are now ready to derive the equilibrium equations (\ref{EqEqn}).
The result is essentially similar to Theorem 2.4 \cite{Ball2}, which
dealt with the compressible inner variations  $u^h_{\epsilon} =
u^h(x) + \epsilon \phi\circ u^h$ of a deformation $u^h$ with clamped
boundary conditions. The growth condition (\ref{growth}) will be
crucial in passing to the limit in the nonlinear term in $J^h$, to
which end we are going to use the following Lemma from \cite{Ball2}:

\begin{lemma} \label{ball2} (Lemma 2.5 (i) \cite{Ball2})
Assume that $W$ satisfies (\ref{growth}). Then there exists $\gamma
> 0$ such that if $A\in\mathbb{R}^{3\times 3}_+$ and
$|A-\mathrm{Id}|<\gamma$, then:
$$|DW(AF)F^T|\leq 3C (W(F) +1) \qquad \forall F\in \mathbb{R}^{3\times 3}_+,$$
where $C$ is the constant in condition (\ref{growth}).
\end{lemma}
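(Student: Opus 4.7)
The plan is to propagate the growth estimate (\ref{growth}) from $F$ to $AF$ along the linear segment $A(t) = (1-t)\mathrm{Id} + tA$, $t \in [0,1]$, via a Gronwall argument, and then recover the claim from a simple conjugation identity. Throughout, fix $F \in \mathbb{R}^{3\times 3}_+$ and take $A$ with $|A - \mathrm{Id}| < \gamma$ for a small $\gamma \in (0,1)$ to be chosen at the end as a function of the constant $C$ appearing in (\ref{growth}).

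First I would check that the whole segment remains in the regime where the hypotheses apply. Since $|A(t) - \mathrm{Id}| \leq \gamma < 1$, each $A(t)$ is invertible with $|A(t)^{-1}| \leq (1-\gamma)^{-1}$ and $\det A(t) > 0$, so $A(t)F \in \mathbb{R}^{3\times 3}_+$, and both the $\mathcal{C}^1$ regularity (vi) and (\ref{growth}) are available at $A(t)F$. The chain rule together with the cyclicity identity $\langle G : BF \rangle = \langle G F^T : B \rangle$ then yields
\begin{equation*}
\frac{\mathrm{d}}{\mathrm{d}t} W(A(t)F) = \bigl\langle DW(A(t)F) F^T : A - \mathrm{Id}\bigr\rangle.
\end{equation*}
The crucial algebraic observation is the factorisation
\begin{equation*}
DW(A(t)F) F^T = \bigl[DW(A(t)F)(A(t)F)^T\bigr]\, A(t)^{-T},
\end{equation*}
which, combined with (\ref{growth}) at $A(t)F$ and with $|A(t)^{-1}| \leq (1-\gamma)^{-1}$, produces the differential inequality
\begin{equation*}
\Bigl| \frac{\mathrm{d}}{\mathrm{d}t} W(A(t)F) \Bigr| \leq \frac{C \gamma}{1-\gamma} \bigl(W(A(t)F) + 1\bigr).
\end{equation*}

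Gronwall's lemma then yields $W(AF) + 1 \leq \exp\bigl(C\gamma/(1-\gamma)\bigr)(W(F) + 1)$. Applying the same factorisation and (\ref{growth}) once more at $t=1$ gives
\begin{equation*}
|DW(AF) F^T| \leq C(W(AF) + 1)\, |A^{-1}| \leq \frac{C \exp(C\gamma/(1-\gamma))}{1 - \gamma}\,(W(F) + 1),
\end{equation*}
and a choice of $\gamma$ small enough that the prefactor is at most $3C$ finishes the argument; note that $\gamma$ depends only on $C$, as required. The only genuinely delicate point is making sure that the whole segment $A(t)F$ stays in $\mathbb{R}^{3\times 3}_+$ so that the growth assumption can be invoked uniformly in $t$ — this is precisely what forces $\gamma$ to be strictly less than $1$ — but beyond this bookkeeping the estimate is a routine ODE comparison, and I do not foresee any deeper obstacle.
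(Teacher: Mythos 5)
Your argument is correct: the conjugation identity $DW(A(t)F)F^T=\bigl[DW(A(t)F)(A(t)F)^T\bigr]A(t)^{-T}$ plus Gronwall along the segment is exactly the standard proof of Ball's Lemma 2.5(i), which the paper itself imports from \cite{Ball2} without reproving. The only cosmetic point is that the bound $|A(t)^{-1}|\leq(1-\gamma)^{-1}$ should be read in the operator norm (the Frobenius norm of the inverse is only comparable to this), but since the Frobenius norm is submultiplicative against the operator norm and $\gamma$ is chosen at the end, this does not affect the conclusion.
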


\begin{theorem}\label{deriv}
Let $\phi\in \mathcal{C}^1_b(\mathbb{R}^3, \mathbb{R}^3)$ be such
that $\mathrm{div}~ \phi = 0$. Given a deformation $u^h\in
W^{1,2}(\Omega^h,\mathbb{R}^3)$ with $\det\nabla u^h = 1$, and such
that $\int_{\Omega^h} W(\nabla u^h)~\mathrm{d}x <+\infty$, define
$u_\epsilon^h(x) = \Phi(\epsilon, u^h(x))$. Then:
$$\frac{\mathrm{d}}{\mathrm{d}\epsilon} _{|\epsilon=0}
J^h(u^h_\epsilon) = 0$$ is equivalent to:
\begin{equation*}
\int_{\Omega^h}  \left\langle DW(\nabla u^h)(\nabla u^h)^T:\nabla
\phi(u^h(x))\right\rangle~\mathrm{d}x = \int_{\Omega^h}f^h \cdot
\phi(u^h)~\mathrm{d}x.
\end{equation*}
\end{theorem}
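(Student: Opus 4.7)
The plan is to compute $\frac{d}{d\epsilon}|_{\epsilon=0}J^h(u^h_\epsilon)$ by interchanging differentiation and integration, and then recognize that its vanishing for every admissible $\phi$ is exactly the stated variational identity. Setting $A(\epsilon,y) := \nabla_y \Phi(\epsilon,y)$, the chain rule gives $\nabla u^h_\epsilon(x) = A(\epsilon,u^h(x))\,\nabla u^h(x)$. Differentiating (\ref{ODE}) in $y$ shows that $A$ is jointly $\mathcal{C}^1$ with $A(0,y)=\mathrm{Id}$, $\partial_\epsilon A(0,y)=\nabla\phi(y)$, and $\partial_\epsilon A = (\nabla\phi\circ\Phi)\,A$. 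Since $\phi\in\mathcal{C}^1_b$, Gr\"onwall yields uniform bounds on $A$ and $\partial_\epsilon A$ on any compact $\epsilon$-interval, together with $A(\epsilon,\cdot)\to\mathrm{Id}$ uniformly as $\epsilon\to 0$. By Lemma \ref{Inner_vari}, $\det\nabla u^h_\epsilon=1$ a.e., so $W_{in}(\nabla u^h_\epsilon)=W(\nabla u^h_\epsilon)$ throughout.

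The linear (forcing) term is straightforward: since $u^h_\epsilon$ is Lipschitz in $\epsilon$ uniformly in $x$ (as $\partial_\epsilon\Phi = \phi\circ\Phi$ is bounded), with pointwise derivative $\phi(u^h)$ at $\epsilon=0$, dominated convergence using $f^h\in L^2$ gives $\frac{d}{d\epsilon}|_{\epsilon=0}\int_{\Omega^h}f^h\cdot u^h_\epsilon\, dx = \int_{\Omega^h}f^h\cdot\phi(u^h)\,dx$.

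The heart of the proof is the elastic term. I write the difference quotient as
$$\frac{W(A(\epsilon,u^h)\,\nabla u^h)-W(\nabla u^h)}{\epsilon} = \int_0^1 \langle DW(A(s\epsilon,u^h)\,\nabla u^h)(\nabla u^h)^T : \partial_\epsilon A(s\epsilon,u^h)\rangle\,ds,$$
using the chain rule together with the identity $\langle M:BF\rangle=\langle MF^T:B\rangle$. For $\epsilon$ small enough we have $|A(s\epsilon,u^h)-\mathrm{Id}|<\gamma$ uniformly in $x$ and $s\in[0,1]$, so Lemma \ref{ball2} applies with $F=\nabla u^h\in\mathbb{R}^{3\times 3}_+$ and gives $|DW(A(s\epsilon,u^h)\,\nabla u^h)(\nabla u^h)^T|\leq 3C(W(\nabla u^h)+1)$. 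Combined with the uniform $L^\infty$ bound on $\partial_\epsilon A$, the difference quotient is dominated by a constant multiple of the integrable function $W(\nabla u^h)+1$. Dominated convergence then yields
$$\frac{d}{d\epsilon}\bigg|_{\epsilon=0}\frac{1}{h}\int_{\Omega^h}W(\nabla u^h_\epsilon)\,dx = \frac{1}{h}\int_{\Omega^h}\langle DW(\nabla u^h)(\nabla u^h)^T:\nabla\phi(u^h)\rangle\,dx,$$
and equating the full derivative to zero is precisely the claimed identity.

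The main obstacle is producing the $\epsilon$-uniform integrable dominant: $W$ is only finite on $\mathbb{R}^{3\times 3}_+$ and blows up as $\det F\to 0+$, so a naive mean value or Taylor bound will not suffice. This is exactly what Ball's growth hypothesis (\ref{growth}) together with the perturbative statement of Lemma \ref{ball2} are designed for; every other step is either a chain rule or a routine application of dominated convergence.
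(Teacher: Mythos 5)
Your proof is correct and follows essentially the same route as the paper's: write the elastic increment via the fundamental theorem of calculus along the flow, dominate the resulting integrand by a multiple of $W(\nabla u^h)+1$ using Lemma \ref{ball2} (valid once $|A(s\epsilon,\cdot)-\mathrm{Id}|<\gamma$ uniformly in $x$ and $s$), and conclude by dominated convergence together with the continuity of $DW$ on $\mathbb{R}^{3\times 3}_+$, treating the force term by the uniform convergence of the difference quotients of $\Phi$. The only difference is organizational: the paper subtracts the target integrand and splits the result into two pieces, applying dominated convergence to each, whereas you apply it once to the full difference quotient.
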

\begin{proof}
For the notational convenience, in what follows we drop the index
$h$ and write $U$ instead of $\Omega^h$, which stands now for a
fixed open bounded domain in $\mathbb{R}^3$. It is easy to notice
that:
\begin{equation}\label{uniform}
\lim_{\epsilon\to 0}\frac{1}{\epsilon}(\Phi(\epsilon, y) - y) =
\phi(y) \quad \mbox{ uniformly in } \mathbb{R}^3.
\end{equation}
It directly implies that:
$$\lim_{\epsilon\to 0}\frac{1}{\epsilon} \int_U f \cdot
(\Phi(\epsilon, u(x)) - u(x))~\mbox{d}x = \int_U f\cdot \phi(u(x))
~\mbox{d}x. $$ To treat the nonlinear term, consider:
\begin{equation}\label{non}
\begin{split}
\frac{1}{\epsilon}&\int_U \big( W(\nabla u_\epsilon) - W(\nabla
u)\big)~\mbox{d}x - \int_U \big \langle DW(\nabla u) (\nabla u)^T :
\nabla\phi(u)\big\rangle ~\mbox{d}x \\ & = \int_U \fint_0^\epsilon
\Big\langle DW\big(\nabla\Phi (s,u)\nabla u\big) (\nabla u)^T:
\nabla \phi(\Phi(s, u))\Big\rangle  - \Big \langle DW(\nabla u)
(\nabla u)^T : \nabla\phi(u)\Big\rangle ~\mbox{d}s ~\mbox{d}x.
\end{split}
\end{equation}
Since  the integrand below converges to $0$ pointwise by
(\ref{uniform}), and it is bounded by the function
$2\|\nabla\phi\|_{L^\infty} |DW(\nabla u)(\nabla u)^T|$ which is
integrable in view of (\ref{growth}), we obtain:
$$ \lim_{\epsilon\to 0}\int_U
\Big\langle DW(\nabla u) (\nabla u)^T: \fint_0^\epsilon \nabla
\phi(\Phi(s, u)) - \nabla\phi(u) ~\mbox{d}s \Big\rangle ~\mbox{d}x =
0,$$ by the dominated convergence theorem. Similarly:
$$ \lim_{\epsilon\to 0}\int_U  \fint_0^\epsilon
\Big\langle \Big(DW(\nabla \Phi(s, u)\nabla u) - DW(\nabla u)\Big)
(\nabla u)^T: \nabla \phi(\Phi(s, u)) \Big\rangle ~\mbox{d}s
~\mbox{d}x = 0,$$ where the pointwise convergence follows by the
formula (\ref{uniform}), its counterpart for $\nabla \Phi$, and the
continuity of $DW$ on $\mathbb{R}^{3\times 3}_+$. The integrands,
for small $\epsilon$,  are dominated by the $L^1(U)$ function
$4C\|\nabla\phi\|_{L^\infty}(W(\nabla u) +1)$ in view of Lemma
\ref{ball2} and the growth condition (\ref{growth}).

Therefore, the left hand side in (\ref{non}) converges to $0$ as
well. This completes the proof.
\end{proof}

\section{The equilibrium equation (\ref{EqEqn})}
In this section, we review several facts from \cite{FJM} and
\cite{MS}, to set the stage for a proof of Theorem \ref{thmain} and
to rewrite the equation (\ref{EqEqn}) using the change of variables
(\ref{yh}).

The first crucial step in the dimension reduction argument of
\cite{FJM} is finding the appropriate approximations of  the
deformations gradients $u^h$. Under the sole assumption:
\begin{equation}\label{en_as2}
\frac{1}{h} \int_{\Omega^h} W(\nabla u^h)~\mbox{d}x \leq Ch^4,
\end{equation}
an application of a nonlinear verion of Korn's inequality
\cite{FJMgeo}, yields existence of rotation fields $R^h\in
W^{1,2}(\Omega, \mathbb{R}^{3\times 3})$ with $R^h(x) \in SO(3)$
a.e. in $\Omega$, so that:
\begin{equation}\label{uno}
\|\nabla u^h(x', hx_3) - R^h\|_{L^2(\Omega^1)} \leq Ch^2 \qquad
\mbox{and} \qquad  \|\nabla R^h\|_{L^2(\Omega)} \leq Ch.
\end{equation}
Recall that $\Omega^1=\Omega\times (-\frac{1}{2}, \frac{1}{2})$ is
the common domain of the rescaled deformations $y^h(x', x_3) = (\bar
R^h)^T u^h(x', hx_3) - c^h$, and the typical point in $\Omega^1$ is
denoted by $x=(x', x_3)$. Then, the detailed analysis in \cite{FJM}
shows that convergences in (i) -- (iv) of Theorem \ref{thmain} hold,
as a consequence of (\ref{en_as}) implying (\ref{en_as2}). The
constant rotations $\bar R^h\in SO(3)$ are given by:
$$\bar R^h = \mathbb{P}_{SO(3)} \left(\fint_{\Omega^h} \nabla u^h~\mbox{d}x\right),$$
where the orthogonal projection $ \mathbb{P}_{SO(3)} $ onto $SO(3)$
above is well defined; see also \cite{LMP} for detailed
calculations. Further, there holds:
\begin{equation}\label{due}
\|R^h - \bar R^h\|_{L^2(\Omega)} \leq Ch \qquad \mbox{and} \qquad
\lim_{h\to 0} (\bar R^h)^T R^h = \mbox{Id} \quad \mbox{in }
W^{1,2}(\Omega, \mathbb{R}^{3\times 3}),
\end{equation}
and upon defining the matrix fields $A^h \in W^{1, 2}(\Omega,
\mathbb R^{3 \times 3})$:
\begin{equation}\label{Ah}
A^h (x') = \frac{1}{h}\left((\bar R^h)^T R^h(x') - \mbox{Id}\right),
\end{equation}
it also follows that:
\begin{equation}\label{CAh}
A^h \rightharpoonup A = \left[\begin{array}{c|c} 0 &
\begin{minipage}{0.8cm}\[\vspace{3mm} -\nabla v\]\end{minipage} \\
\hline \nabla v & \begin{minipage}{0.8cm}\[ \vspace{3mm}
0\]\end{minipage}
\end{array} \right]
 \qquad \mbox{weakly in } W^{1, 2}(\Omega, \mathbb R^{3 \times 3}).
\end{equation}
The same convergence holds strongly in $L^{q}(\Omega, \mathbb R^{3
  \times 3})$ for each $q \geq 1$.

\begin{lemma}
We have:
\begin{equation}\label{convi}
\lim_{h\to 0} y^h = (x',0) \quad \mbox{ and } \quad \lim_{h\to 0}
\frac{y^h_3}{h} = x_3 + v(x')  \qquad \mbox{in } ~W^{1,2}(\Omega^1).
\end{equation}
Consequently, for every $\omega_h>0$ and $p\in [1, 5]$:
\begin{equation}\label{convi2}
\left|\left\{x\in\Omega^1;~ \frac{|y^h_3(x)|}{h}\geq \omega_h
\right\}\right| \leq \frac{C}{\omega_h^2}\quad \mbox{ and } \quad
\int_{\left\{x\in\Omega^1;~ \frac{|y^h_3(x)|}{h}\geq \omega_h
\right\}} \left|\frac{y^h_3(x)}{h}\right|^p~\mathrm{d}x \leq
\frac{C}{\omega_h^{\frac{2}{p+1}}}.
\end{equation}
\end{lemma}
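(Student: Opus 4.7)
The plan is to derive both convergences in (\ref{convi}) from the rescaled-gradient identity together with the weak convergence $A^h \rightharpoonup A$ from (\ref{Ah})--(\ref{CAh}), and then to deduce the truncation estimates (\ref{convi2}) from Chebyshev's inequality combined with the Sobolev embedding $W^{1,2}(\Omega^1) \hookrightarrow L^6(\Omega^1)$.

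First I would record that the definition (\ref{yh}) of $y^h$ yields the scaled-gradient identity $[\partial_1 y^h,\partial_2 y^h, h^{-1}\partial_3 y^h](x) = (\bar R^h)^T \nabla u^h(x', hx_3)$, so combining (\ref{uno}), (\ref{due}) and (\ref{Ah}) gives the $L^2$-expansion
\begin{equation*}
[\partial_1 y^h,\partial_2 y^h, h^{-1}\partial_3 y^h] = \mathrm{Id} + h A^h + O_{L^2}(h^2).
\end{equation*}
Reading off the third row and dividing by $h$ produces $\partial_\alpha(y_3^h/h) = A^h_{3\alpha} + O_{L^2}(h)$ for $\alpha = 1,2$ and $\partial_3(y_3^h/h) = 1 + h A^h_{33} + O_{L^2}(h)$; since (\ref{CAh}) identifies $A_{3\alpha}=\partial_\alpha v$ and $A_{33}=0$, and $A^h\to A$ strongly in $L^q$ for every $q\geq 1$, I obtain strong convergence $\nabla(y_3^h/h) \to \nabla(x_3+v(x'))$ in $L^2(\Omega^1)$. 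A parallel calculation yields $\nabla y^h \to \nabla(x',0)$ in $L^2(\Omega^1)$.

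To upgrade the gradient convergence of $y_3^h/h$ to convergence in $W^{1,2}$, I would apply the slicewise Poincaré inequality on $\{x'\}\times(-1/2,1/2)$ to the auxiliary function $r^h(x):= y_3^h(x)/h - x_3 - v^h(x')$. By construction (\ref{Sc_Nor_Dis}), $r^h$ has zero mean in $x_3$, so $\|r^h\|_{L^2(\Omega^1)} \leq C\|\partial_3 r^h\|_{L^2(\Omega^1)} = C\|\partial_3(y_3^h/h)-1\|_{L^2}$, which tends to $0$ by the computation above. Adding back $v^h\to v$ in $L^2(\Omega)$ (from assertion (iii) of Theorem \ref{thmain}) gives $y_3^h/h\to x_3+v$ in $L^2(\Omega^1)$, hence in $W^{1,2}(\Omega^1)$; the first limit in (\ref{convi}) is just assertion (ii) of the theorem restated in the $3$d target.

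The estimates (\ref{convi2}) then reduce to integrability. The first is Chebyshev applied to the $L^2(\Omega^1)$-bounded sequence $y_3^h/h$. For the second, H\"older's inequality with dual exponents $(p+1)/p$ and $p+1$ bounds the truncated integral by
\begin{equation*}
\Big\|\frac{y_3^h}{h}\Big\|_{L^{p+1}(\Omega^1)}^{p}\cdot \Big|\Big\{\frac{|y_3^h|}{h}\geq\omega_h\Big\}\Big|^{1/(p+1)};
\end{equation*}
the first factor is uniformly bounded for $p\leq 5$ since $p+1\leq 6$ and $W^{1,2}(\Omega^1)\hookrightarrow L^6(\Omega^1)$, while the second factor is $\leq (C/\omega_h^2)^{1/(p+1)}=C\omega_h^{-2/(p+1)}$ by the first part. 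I do not foresee any serious obstacle; the only mild technical point is carrying the $O_{L^2}(h^2)$ remainder through the expansion, which is already encoded in (\ref{uno}).
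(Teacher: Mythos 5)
Your proof is correct and follows essentially the same route as the paper: the slicewise Poincar\'e--Wirtinger inequality applied to $y_3^h/h - x_3 - v^h$ (whose $x_3$-mean vanishes by (\ref{Sc_Nor_Dis})), followed by H\"older with exponents $(p+1)/p$, $p+1$ and the Sobolev embedding $W^{1,2}(\Omega^1)\hookrightarrow L^6(\Omega^1)$ for (\ref{convi2}). The only cosmetic differences are that you verify the convergence of the tangential gradients explicitly through the expansion $\mathrm{Id}+hA^h+O_{L^2}(h^2)$ (the paper leans on the convergences imported from \cite{FJM}), and you get the measure bound by direct Chebyshev in $L^2$ rather than via the $p=1$ case of the H\"older estimate; both are equally valid.
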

\begin{proof}
By (\ref{uno}), (\ref{due}), and applying the Poincar\'e-Wirtinger
inequality on segments $\{x'\}\times (-\frac{1}{2}, \frac{1}{2})$,
we see that:
\begin{equation*}
\begin{split}
\left\|\frac{y_3^h}{h} - x_3 - v^h(x')\right\|_{L^2(\Omega^1)} &
\leq C \Big\|\frac{\partial_3 y_3^h}{h} - 1\Big\|_{L^2(\Omega^1)} =
C \Big\|[(\bar R^h)^T \nabla u^h(x', hx_3)]_{33} - 1\Big\|_{L^2(\Omega^1)} \\
& \leq C \|(\bar R^h)^T\nabla u^h(x', hx_3) -
\mbox{Id}\|_{L^2(\Omega^1)} \\ & \leq C \|\nabla u^h(x', hx_3) -
R^h\|_{L^2(\Omega^1)} + C \|R^h - \bar R^h\|_{L^2(\Omega^1)} \leq
Ch.
\end{split}
\end{equation*}
Together with (\ref{Sc_Nor_Dis}), the above inequality implies the
second assertion in (\ref{convi}). The first assertion follows then
directly in view of (\ref{Sc_Tan_Dis}).

To prove (\ref{convi2}), note that for every $p\in [1,5]$:
\begin{equation}\label{eins}
\begin{split}
\int_{\left\{x\in\Omega^1;~ \frac{|y^h_3(x)|}{h}\geq \omega_h
\right\}} \left|\frac{y^h_3(x)}{h}\right|^p~\mathrm{d}x & \leq
\left\|\frac{y^h_3}{h}\right\|^p_{L^{p+1}}
\left|\left\{x\in\Omega^1;~ \frac{|y^h_3(x)|}{h}\geq \omega_h
  \right\}\right|^{\frac{1}{p+1}} \\
& \leq C \left|\left\{x\in\Omega^1;~ \frac{|y^h_3(x)|}{h}\geq
\omega_h
  \right\}\right|^{\frac{1}{p+1}},
\end{split}
\end{equation}
by the H\"older inequality and the Sobolev embedding
$W^{1,2}(\Omega^1) \hookrightarrow L^6(\Omega^1)$ combined with
(\ref{convi}). When $p=1$, it implies:
$$ \left|\left\{x\in\Omega;~ \frac{|y^h_3(x)|}{h}\geq \omega_h  \right\}\right|
\leq \frac{1}{\omega_h} \int_{\left\{x\in\Omega;~
\frac{|y^h_3(x)|}{h}\geq \omega_h  \right\}}
\frac{|y^h_3(x)|}{h}~\mathrm{d}x \leq \frac{C}{\omega_h}
\left|\left\{x\in\Omega;~ \frac{|y^h_3(x)|}{h}\geq \omega_h
  \right\}\right|^{1/2}  $$
Hence, the first assertion in (\ref{convi2}) follows, as well as the
second one, in view of (\ref{eins}).
 \end{proof}

\bigskip

Define the  strain $G^h \in L^2(\Omega^1, \mathbb R^{3
  \times 3})$ and the scaled stress $E^h\in L^1(\Omega^1, \mathbb R^{3 \times 3})$ as:
\begin{equation*}\label{GhEh}
\begin{split}
& G^h(x', x_3) = \frac{1}{h^2} \left((R^h)^T \nabla u^h(x', hx_3) -
  \mbox{Id}\right),\\
& E^h(x', x_3) = \frac{1}{h^2} DW(\mbox{Id} + h^2 G^h(x',
x_3))(\mbox{Id} + h^2 G^h(x', x_3))^T.
\end{split}
\end{equation*}
We now gather the fundamental properties of $E^h$ and $G^h$ from
\cite{MS}, that will be used in the sequel.

\begin{lemma}\label{lemMS}  (Section 4, \cite{MS})
\begin{itemize}
\item[(i)] Up to a subsequence, $G^h\rightharpoonup G$ weakly in $L^2(\Omega^1, \mathbb
{R}^{3 \times 3})$, where $G$ is the limiting strain whose principal
$2\times 2$ minor $G''$ satisfies:
\begin{equation}\label{Lim_Gh}
\begin{split}
& G''(x', x_3) = G_0(x') - x_3 G_1(x'), \quad \mbox{ with: }\\
& \mathrm{sym}~ G_0 = \mathrm{sym}\nabla w + \frac{1}{2}\nabla
v\otimes \nabla v, \qquad G_1 = \nabla^2 v.
\end{split}
\end{equation}
\item[(ii)] Each $E^h(x)$ is symmetric, and there holds:
\begin{equation}\label{tre}
|E^h| \leq C \left(\frac{1}{h^2}W(\mathrm{Id} + h^2 G^h) +
|G^h|\right).
\end{equation}
\item[(iii)] Up to a subsequence, $E^h\rightharpoonup E$ weakly
in $L^1(\Omega^1, \mathbb{R}^{3\times 3})$, and $E=\mathcal{L}_3(G)
\in L^2(\Omega^1, \mathbb{R}^{3\times 3})$.
\item[(iv)] For a given, fixed $\gamma\in (0,2)$, define $B_h = \{x\in\Omega^1;
  ~ h^{2-\gamma}|G^h(x)|\leq 1\}$. Then:
\begin{equation}\label{quattro}
|\Omega^1\setminus B_h|\leq Ch^{2(2-\gamma)} \qquad \mbox{and}
\qquad \int_{\Omega^1\setminus B_h} |E^h| ~\mathrm{d}x \leq
Ch^{2-\gamma}.
\end{equation}
Moreover, calling  $\chi_h$  the
  characteristic function of $B_h$, we have:
\begin{equation}\label{cinque}
\chi_h E^h \rightharpoonup E \quad \mbox{weakly in } L^2(\Omega^1,
\mathbb{R}^{3\times 3}).
\end{equation}
\end{itemize}
\end{lemma}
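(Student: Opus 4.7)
The plan is to prove the four statements in the stated order, leaning throughout on (\ref{uno})--(\ref{CAh}), on the energy bound $\int_{\Omega^1} W(\mathrm{Id} + h^2 G^h)\,\mathrm{d}x \leq Ch^4$, and on a Taylor expansion of $DW$ around $\mathrm{Id}$ refined by splitting $\Omega^1$ into the good set $B_h$ and its complement.

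For part (i), the bound $\|G^h\|_{L^2(\Omega^1)} \leq C$ is immediate from (\ref{uno}), giving weak compactness in $L^2$. To identify the structure of $G''$ I would start from the identity
\[\nabla_h y^h := \bigl[\partial_1 y^h,\,\partial_2 y^h,\, h^{-1}\partial_3 y^h\bigr] = (\mathrm{Id} + hA^h)(\mathrm{Id} + h^2 G^h),\]
equate mixed derivatives $\partial_3 \partial_\alpha y^h = \partial_\alpha \partial_3 y^h$ for $\alpha\in\{1,2\}$, and use that $A^h$ depends only on $x'$: this yields $\partial_3 G^h_{i\alpha} = \partial_\alpha A^h_{i3} + O(h)$ in the sense of distributions. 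Passing to the weak limit and invoking (\ref{CAh}) gives $\partial_3 G''_{i\alpha} = -\partial_i\partial_\alpha v$, hence $G''(x',x_3) = G_0(x') - x_3\nabla^2 v(x')$ with $G_1 = \nabla^2 v$. To identify $\mathrm{sym}\,G_0$, I would exploit $R^h \in SO(3)$ to get
\[(\nabla_h y^h)^T(\nabla_h y^h) - \mathrm{Id} = 2h^2\,\mathrm{sym}\,G^h + O(h^4|G^h|^2),\]
whose $2\times 2$ minor, averaged in $x_3$ and divided by $2h^2$, equals $\mathrm{sym}\,\nabla w + \frac{1}{2}\nabla v\otimes\nabla v$ after using (\ref{Sc_Nor_Dis})--(\ref{Sc_Tan_Dis}) together with the strong convergences in (\ref{convi}). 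Isolating the quadratic contribution $\frac{1}{2}\nabla v\otimes\nabla v$ is the step I expect to demand most care, since it emerges only after systematically tracking all order-$h^2$ contributions to $y^h - (x',0)$ in the Cauchy--Green expansion.

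Part (ii) is algebraic. Symmetry of $E^h$ follows from frame invariance: differentiating $W(RF) = W(F)$ at $R = \mathrm{Id}$ along a skew direction $A$ gives $\langle A : DW(F)F^T\rangle = 0$ for every skew $A$, whence $DW(F)F^T$ is symmetric. For the pointwise bound (\ref{tre}), I would split $\Omega^1$ according to whether $|h^2 G^h| < \gamma$, with $\gamma$ the threshold of Lemma \ref{ball2}: on the small-strain set, Taylor expansion at $\mathrm{Id}$ combined with $DW(\mathrm{Id}) = 0$ yields $|E^h| \leq C|G^h|$; on its complement, Lemma \ref{ball2} together with growth condition (v) yields $|E^h| \leq Ch^{-2}W(\mathrm{Id} + h^2 G^h)$.

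Parts (iii) and (iv) are then structural. Chebyshev applied to $\|G^h\|_{L^2}\leq C$ gives $|\Omega^1\setminus B_h| \leq Ch^{2(2-\gamma)}$, while combining (\ref{tre}) with the energy bound and Cauchy--Schwarz yields $\int_{\Omega^1\setminus B_h}|E^h|\,\mathrm{d}x \leq Ch^{2-\gamma}$. On $B_h$ the estimate $|h^2 G^h| \leq h^\gamma$ together with the $\mathcal{C}^2$-regularity of $W$ near $SO(3)$ produces the expansion
\[\chi_h E^h = \chi_h\mathcal{L}_3(G^h) + r^h, \qquad \|r^h\|_{L^2(\Omega^1)} \to 0,\]
so that $\chi_h E^h \rightharpoonup \mathcal{L}_3(G) =: E$ weakly in $L^2$, establishing (\ref{cinque}). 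The weak $L^1$ convergence $E^h \rightharpoonup E$ of part (iii) then follows by combining this with the small-mass bound (\ref{quattro}) on $\Omega^1\setminus B_h$, which ensures equiintegrability of $\{E^h\}$; the identity $E = \mathcal{L}_3(G)$ and $E\in L^2$ are inherited directly from the weak $L^2$ limit of $\chi_h E^h$.
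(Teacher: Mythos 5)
The paper does not prove this lemma at all: it is imported verbatim from Section~4 of \cite{MS} (with part (i) going back to \cite{FJM}), so there is no in-paper argument to compare against. Your reconstruction follows the same route as those references, and its overall architecture is sound: symmetry of $E^h$ from frame invariance, the dichotomy $|h^2G^h|<\gamma$ versus $|h^2G^h|\ge\gamma$ for (\ref{tre}) (where on the large-strain set the additive constant in (\ref{growth}) is absorbed because there $h^{-2}\le\gamma^{-1}|G^h|$), Chebyshev for $|\Omega^1\setminus B_h|$, Cauchy--Schwarz plus the energy bound for $\int_{\Omega^1\setminus B_h}|E^h|$, Taylor expansion of $DW$ at $\mathrm{Id}$ on $B_h$ to get $\chi_hE^h=\chi_h\mathcal{L}_3(G^h)+r^h$, and the splitting $\int E^h\psi=\int\chi_hE^h\psi+\int(1-\chi_h)E^h\psi$ for the weak-$L^1$ statement. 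One small point there: to conclude $\chi_h\mathcal{L}_3(G^h)\rightharpoonup\mathcal{L}_3(G)$ you should note explicitly that $(1-\chi_h)G^h\rightharpoonup 0$ in $L^2$, which holds because it is $L^2$-bounded and supported on sets of vanishing measure.

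The only place where the sketch leaves a real technical issue unaddressed is part (i), and you correctly identify it as the hard step. Two caveats. First, in the Cauchy--Green identity $(\nabla_hy^h)^T(\nabla_hy^h)-\mathrm{Id}=2h^2\,\mathrm{sym}\,G^h+h^4(G^h)^TG^h$, the quadratic remainder divided by $h^2$ is bounded only in $L^1$, so you cannot pass to the weak $L^2$ limit termwise; the argument of \cite{FJM} handles this by a truncation to a good set (the same device as $B_h$) before averaging in $x_3$. Second, the in-plane gradients $\nabla'(y^h)'-\mathrm{Id}$ scaled by $h^{-2}$ are \emph{not} individually bounded in $L^2$ (only $\nabla w^h$ of the averaged displacement converges, and only weakly), so isolating $\mathrm{sym}\nabla w+\frac12\nabla v\otimes\nabla v$ requires the cancellation between the symmetrized in-plane part and the $\partial_\alpha y_3^h\,\partial_\beta y_3^h$ term rather than a naive expansion of $y^h-(x',0)$ to order $h^2$. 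These are exactly the points settled in \cite{FJM}, so citing that identification (as the present paper implicitly does) is the cleanest resolution; attempting to reprove it from scratch along the lines you sketch would need these two repairs.
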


The below more convenient form of the equilibrium condition will be
repeatedly used in the proof of Theorem \ref{thmain}.
\begin{lemma} Condition (\ref{EqEqn}) is equivalent to:
\begin{equation}\label{Eq2}
\begin{split}
&\int_{\Omega^1}\left\langle (\bar R^h)^T R^hE^h(x', x_3) (R^h)^T
\bar
  R^h : \nabla \phi(y^h(x', x_3))\right\rangle~\mathrm{d}x_3~\mathrm{d}x' \\
&\qquad\qquad\qquad\qquad\qquad =  h \int_{\Omega^1} \left\langle
f(x')e_3, \bar R^h
  \phi(y^h(x', x_3))\right\rangle ~\mathrm{d}x_3~\mathrm{d}x',
\end{split}
\end{equation}
for each $\phi \in \mathcal{C}_b^1(\mathbb R^3, \mathbb R^3)$ with
$\mathrm{div} \phi = 0$.
\end{lemma}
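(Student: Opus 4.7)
The plan is to show that \eqref{Eq2} is simply the equation \eqref{EqEqn} written on the rescaled domain $\Omega^1$, after substituting the definitions of $y^h$, $R^h$, $\bar R^h$, $G^h$, $E^h$, and after relabeling the test vector field. No new analytic input is required beyond the frame invariance of $W$, the symmetry of $E^h$ provided by Lemma \ref{lemMS}(ii), and the standard change of variables.

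First I would perform the change of variable $x_3\mapsto h x_3$ in \eqref{EqEqn}, which contributes a factor $h$ to $\mathrm{d}x$. The right-hand side, with $f^h(x)=h^3 f(x')e_3$, becomes
\begin{equation*}
h^4 \int_{\Omega^1} f(x')e_3\cdot \phi\bigl(u^h(x',hx_3)\bigr)~\mathrm{d}x_3~\mathrm{d}x'.
\end{equation*}
For the left-hand side, I would use the identity $\nabla u^h(x',hx_3)=R^h(\mathrm{Id}+h^2G^h)$ together with the frame invariance of $W$. Differentiating $W(RF)=W(F)$ in $F$ yields $DW(RF)=R\,DW(F)$, whence
\begin{equation*}
DW(\nabla u^h)(\nabla u^h)^T \;=\; h^2\, R^h E^h (R^h)^T.
\end{equation*}

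Second, I would rewrite the test function. Set $\Psi(y)=(\bar R^h)^T\phi(\bar R^h(y+c^h))$, so that $\phi\circ u^h(\cdot,h\cdot) = \bar R^h\,\Psi(y^h)$ and $\nabla\phi\circ u^h(\cdot,h\cdot) = \bar R^h\,\nabla\Psi(y^h)\,(\bar R^h)^T$. Since $\bar R^h\in SO(3)$ and $c^h$ is constant, $\mathrm{div}\,\Psi=\mathrm{tr}((\bar R^h)^T\nabla\phi\,\bar R^h)=\mathrm{div}\,\phi=0$, and $\Psi\in\mathcal{C}_b^1$. The map $\phi\mapsto\Psi$ is a bijection of $\{\phi\in\mathcal{C}_b^1(\mathbb{R}^3,\mathbb{R}^3):\mathrm{div}\,\phi=0\}$ onto itself, so quantifying over $\phi$ in \eqref{EqEqn} is equivalent to quantifying over $\Psi$.

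Third, I would use the cyclic property of the trace and the symmetry of $E^h$ to push the rotations inside the inner product:
\begin{equation*}
\bigl\langle R^h E^h (R^h)^T : \bar R^h\nabla\Psi (\bar R^h)^T\bigr\rangle
\;=\; \bigl\langle (\bar R^h)^T R^h E^h (R^h)^T \bar R^h : \nabla\Psi\bigr\rangle.
\end{equation*}
Putting everything together, the left-hand side of \eqref{EqEqn} becomes $h^3$ times the left-hand side of \eqref{Eq2} (with $\Psi$ in place of $\phi$), while the right-hand side becomes $h^4$ times the right-hand side of \eqref{Eq2}. Dividing both sides by $h^3$ gives \eqref{Eq2}. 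Reversing the substitution $\Psi\mapsto\phi$ (via $\phi(z)=\bar R^h\Psi((\bar R^h)^Tz-c^h)$) gives the converse implication. There is no real obstacle here: the only mildly subtle point is remembering to exploit the symmetry of $E^h$ in order to collapse $R^h(\cdot)(R^h)^T$ and $\bar R^h(\cdot)(\bar R^h)^T$ into a single conjugation of $E^h$ inside the Frobenius pairing.
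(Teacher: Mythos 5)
Your proposal is correct and follows essentially the same route as the paper: a change of variables $x_3\mapsto hx_3$, the conjugated test field $\Psi(y)=(\bar R^h)^T\phi(\bar R^h(y+c^h))$ (the paper writes the inverse map $\psi(y)=\bar R^h\phi((\bar R^h)^Ty-c^h)$, which is the same bijection read in the other direction), and the frame-invariance identity $DW(\nabla u^h)(\nabla u^h)^T=h^2R^hE^h(R^h)^T$. The only minor quibble is that collapsing the rotations inside the Frobenius pairing needs only the cyclic property of the trace, not the symmetry of $E^h$.
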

\begin{proof}
For a given divergence free $\phi \in \mathcal{C}^1_b(\mathbb R^3,
\mathbb R^3)$, consider:
\begin{equation*}\label{Ch_Va}
\psi(y) = \bar R^h \phi\left((\bar R^h)^T y - c^h\right),
\end{equation*}
which satisfies $\psi\in\mathcal{C}^1_b$ and $\mbox{div} \psi=0$,
and moreover:
$$ \nabla \psi\left(u^h(x', hx_3)\right) = \bar R^h \nabla
\phi\left(y^h(x', x_3)\right)(\bar R^h)^T. $$ Use now (\ref{EqEqn})
with the divergence-free test function $\psi$:
\begin{equation*}\label{Aft_Ch_V_Eq}
\begin{split}
&\int_{\Omega}\int_{-1/2}^{1/2} \left\langle DW\left(\nabla u^h(x',
  hx_3)\right)\left(\nabla u^h(x', hx_3)\right)^T : \bar R^h \nabla
\phi(y^h(x', x_3))(\bar R^h)^T \right\rangle~\mbox{d}x_3~\mbox{d}x' \\
&\hspace{6cm}= h^3\int_{\Omega}\int_{-1/2}^{1/2}  f(x')e_3 \cdot
\bar R^h \phi(y^h(x', x_3))~\mbox{d}x_3~\mbox{d}x'.
\end{split}
\end{equation*}
The formula (\ref{Eq2}) now follows directly, in view of:
\begin{equation*}\label{DWT}
\begin{split}
DW(\nabla u^h(x', hx_3))(\nabla u^h(x', hx_3))^T & = R^h
DW(\mbox{Id} +
h^2 G^h(x))(\mbox{Id} + h^2 G^h(x))^T (R^h)^T \\
& =  h^2 R^h E^h(x', x_3)(R^h)^T.
\end{split}
\end{equation*}
\end{proof}

\section{Identification of the operators in (\ref{EL1}) -- (\ref{EL2})}

\begin{lemma}\label{lemma4.1}
Let $G\in\mathbb{R}^{3\times 3}$ and a symmetric matrix
$E\in\mathbb{R}^{3\times 3}$ satisfy:
$$\mathcal{L}_3(G) = E, \quad \mathrm{Tr}~G = 0 \quad \mbox{and}
\quad E_{13} = E_{23} = 0.$$ Then:
\begin{equation}\label{L2form}
\mathcal{L}_2^{in}(G'') = E'' -  E_{33} \mathrm{Id}_2.
\end{equation}
\end{lemma}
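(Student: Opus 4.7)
The plan is to realize $G$ itself as the minimizer in the variational definition of $\mathcal{Q}_2^{in}(G'')$, and then extract the matrix $\mathcal{L}_2^{in}(G'')$ by a polarization argument.

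First, by frame invariance of $W$, the quadratic form $\mathcal{Q}_3$ depends only on the symmetric part of its argument, so $\mathcal{L}_3$ annihilates antisymmetric matrices and sends symmetric matrices to symmetric matrices; the analogous statement holds for $\mathcal{L}_2^{in}$. Hence both sides of (\ref{L2form}) depend only on $\mathrm{sym}\, G$, and I may assume $G$ is symmetric. Setting $d = (G_{13}, G_{23}, G_{33}/2)$, the embedding of $G''$ into $\mathbb{R}^{3\times 3}$ plus $d\otimes e_3 + e_3\otimes d$ coincides with $G$, while the trace constraint in (\ref{Q2}) reduces to the hypothesis $\mathrm{Tr}\, G = 0$. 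Thus $d$ is admissible for the minimization defining $\mathcal{Q}_2^{in}(G'')$.

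The core observation is that $d$ is the minimizer. Since the constraint fixes $d_3$, the only free variables are $d_1$ and $d_2$, and the first-order stationarity conditions read $[\mathcal{L}_3(G)]_{13} = [\mathcal{L}_3(G)]_{23} = 0$, which are precisely the hypotheses $E_{13} = E_{23} = 0$. Strict positive definiteness of $\mathcal{Q}_3$ on symmetric matrices makes the constrained minimization strictly convex, so its unique critical point is the minimum. Consequently
\begin{equation*}
\mathcal{Q}_2^{in}(G'') = \mathcal{Q}_3(G) = \langle E : G\rangle.
\end{equation*}

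The main obstacle is upgrading this equality of quadratic forms at a single point to the matrix identity (\ref{L2form}). Here I would exploit that the optimal $d^*(F'')$ depends linearly on $F''$, so the completion map $F'' \mapsto \tilde G^*(F'')$ sending $F''\in\mathbb{R}^{2\times 2}$ to its minimizer in $\mathbb{R}^{3\times 3}$ is linear, and $\mathcal{Q}_2^{in} = \mathcal{Q}_3\circ \tilde G^*$. Polarizing, for every test matrix $\tilde F\in \mathbb{R}^{2\times 2}$,
\begin{equation*}
\langle \mathcal{L}_2^{in}(G'') : \tilde F \rangle = \langle \mathcal{L}_3(G) : \tilde G^*(\tilde F) \rangle = \langle E : \tilde G^*(\tilde F) \rangle.
\end{equation*}
The hypotheses $E_{13} = E_{23} = 0$ together with the symmetry of $E$ kill every third-row or third-column entry of $\tilde G^*(\tilde F)$ in this contraction except the $(3,3)$ entry, and the trace constraint forces $\tilde G^*(\tilde F)_{33} = -\mathrm{Tr}\, \tilde F$. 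The right-hand side therefore collapses to $\langle E'' : \tilde F \rangle - E_{33}\,\mathrm{Tr}\, \tilde F = \langle E'' - E_{33}\mathrm{Id}_2 : \tilde F \rangle$. Since $\tilde F$ is arbitrary, this proves (\ref{L2form}).
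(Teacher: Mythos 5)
Your proposal is correct and follows essentially the same route as the paper: identify the completion of $G''$ by $d=(G_{13},G_{23},G_{33}/2)$ as the (unique, by strict convexity) minimizer in the definition of $\mathcal{Q}_2^{in}(G'')$ using the stationarity conditions $E_{13}=E_{23}=0$, then use linearity of the minimizer map and polarization to collapse $\langle E:\tilde G^*(\tilde F)\rangle$ to $\langle E''-E_{33}\mathrm{Id}_2:\tilde F\rangle$. The only cosmetic difference is that you verify the first-order conditions directly, whereas the paper first computes $\mathcal{Q}_2^{in}(G'')=\mathcal{Q}_3(G)$ and then invokes uniqueness of the minimizer.
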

\begin{proof}
Since $\mathcal{L}$ and $\mathcal{Q}$ depend only on the symmetric
parts of their arguments, we may without loss of generality assume
that $G$ is symmetric.

Firstly, by definitions in (\ref{Q2}), (\ref{Ls}), it follows that
for every $F''\in\mathbb{R}^{2\times 2}$ there is a unique
tangential minimizer $d= d(F'')\in\mathbb{R}^2$, in the sense that:
\begin{equation}\label{dmin}
\mathcal{Q}_2^{in}(F'') = \mathcal{Q}_3 (\left[\begin{array}{cc} F''
&
    d\\ d & - \mbox{Tr } F''\end{array}\right]) \quad
\mbox{ and } \quad \Big\langle\mathcal{L}_3(\left[\begin{array}{cc}
F'' &
    d\\ d & - \mbox{Tr } F''\end{array}\right]) :
\left[\begin{array}{cc} 0 &
    c\\ c & 0 \end{array}\right]\Big\rangle = 0 \quad \forall c\in\mathbb{R}^2.
\end{equation}
The second identity above is just the Euler-Lagrange equation for
the minimization in (\ref{Q2}). By convexity of this minimization
problem, it also follows that $d$ is linear:
\begin{equation}\label{lind}
d(F'' + G'') = d(F'') + d(G'')
\end{equation}

Observe now that:
\begin{equation*}\label{Q2form}
\begin{split}
\mathcal{Q}_2(G'') & = \mathcal{Q}_3 (\left[\begin{array}{cc} G'' &
    d(G'')\\ d(G'') & G_{33}\end{array}\right])
= \Big\langle\mathcal{L}_3 (\left[\begin{array}{cc} G'' &
    d(G'')\\ d(G'') & G_{33}\end{array}\right]) : \left[\begin{array}{cc} G'' &
    d(G'')\\ d(G'') & G_{33}\end{array}\right] \Big\rangle \\ & =
\Big\langle\Big( E + \mathcal{L}_3 (\left[\begin{array}{cc} 0 &
    d(G'') - G_{13, 23}\\ d(G'') - G_{13, 23}& 0\end{array}\right])\Big) : \left[\begin{array}{cc} G'' &
    d(G'')\\ d(G'') & G_{33}\end{array}\right] \Big\rangle \\ &
= \langle E'': G''\rangle + E_{33}G_{33} \\
&\qquad\qquad\quad + \Big\langle\mathcal{L}_3
(\left[\begin{array}{cc} G'' &
    d(G'')\\ d(G'') & G_{33}\end{array}\right]) : \left[\begin{array}{cc} 0 &
    d(G'') - G_{13, 23}\\ d(G'') - G_{13, 23}& 0\end{array}\right]
\Big\rangle\\ & = \langle E'': G''\rangle + E_{33}G_{33} = \langle
E: G\rangle = \mathcal{Q}_3(G),
\end{split}
\end{equation*}
where we repeatedly used the assumptions on $G$ and $E$, and
(\ref{dmin}). Consequently, by uniqueness of the minimizer $d$, it
follows that:
\begin{equation}\label{Ggood}
d(G'') = G_{13, 23}.
\end{equation}
Take any $F''\in \mathbb{R}^{2\times 2}$. By (\ref{dmin}) and
(\ref{lind}), we see that:
$$ \mathcal{Q}_2(G''+F'') = \mathcal{Q}_3(\left[\begin{array}{cc} G''+
    F'' & d(G'') + d(F'')\\ d(G'') + d(F'') & G_{33} - \mbox{Tr } F''\end{array}\right]). $$
Expanding the above and removing $\mathcal{Q}_2(G'')$ and
$\mathcal{Q}_2(F'')$ from both sides, we obtain:
\begin{equation*}
\begin{split}
\langle\mathcal{L}_2(G'') : F''\rangle & = \Big\langle \mathcal{L}_3
(\left[\begin{array}{cc} G'' &
    d(G'')\\ d(G'') & -\mbox{Tr } G''\end{array}\right]) : \left[\begin{array}{cc} F'' &
    d(F'')\\ d(F'') & -\mbox{Tr } F''\end{array}\right] \Big\rangle \\
& = \Big\langle \mathcal{L}_3 (\left[\begin{array}{cc} G'' &
    d(G'')\\ d(G'') & -\mbox{Tr } G''\end{array}\right]) : \left[\begin{array}{cc} F'' &
    0\\ 0 & -\mbox{Tr } F''\end{array}\right] \Big\rangle \\ &
= \Big\langle \mathcal{L}_3 (G) : \left[\begin{array}{cc} F'' &
    d(F'')\\ d(F'') & -\mbox{Tr } F''\end{array}\right] \Big\rangle =
\Big\langle E : \left[\begin{array}{cc} F'' &
    d(F'')\\ d(F'') & -\mbox{Tr } F''\end{array}\right] \Big\rangle \\
& = \langle E'' - E_{33}\mbox{Id}_2 : F''\rangle,
\end{split}
\end{equation*}
by (\ref{Ggood}) and assumptions on $E$ and $G$. The expression
(\ref{L2form}) follows now directly.
\end{proof}

In section \ref{secGE} below we shall prove that for almost every
$x\in\Omega^1$ there holds:
\begin{equation}\label{property}
\mbox{Tr } G(x) = 0 \quad \mbox{and} \quad E_{13}(x) = E_{23}(x)=0.
\end{equation}
Therefore, recalling Lemma \ref{lemMS} (iii), we observe that the
limiting stress and strain satisfy the assumptions of Lemma
\ref{lemma4.1} pointwise almost everywhere. We now record the
following simple conclusion which will be used in deriving the
Euler-Lagrange equations (\ref{EL1}), (\ref{EL2}).

\begin{lemma}
Let $E, G\in L^2(\Omega^1,\mathbb{R}^{3\times 3})$ be the limiting
strain and stress as in Lemma \ref{lemMS}, which are related to
$(w,u)$ by (\ref{Lim_Gh}). Then, for almost every $x'\in\Omega$,
there holds:
\begin{equation}\label{aiuto}
\begin{split}
& \int_{-1/2}^{1/2} (E'' -E_{33}\mathrm{Id}_2) ~\mathrm{d}x_3 =
\mathcal{L}_2^{in}\left(\mathrm{sym} \nabla w + \frac{1}{2}\nabla
  v\otimes \nabla v\right),\\
& \int_{-1/2}^{1/2} x_3(E'' -E_{33}\mathrm{Id}_2)~\mathrm{d}x_3 = -
\frac{1}{12}\mathcal{L}_2^{in}\left(\nabla^2v\right).
\end{split}
\end{equation}
\end{lemma}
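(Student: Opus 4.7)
The plan is to apply Lemma \ref{lemma4.1} pointwise. By the property (\ref{property}), which is established in Section \ref{secGE}, we have $\mathrm{Tr}\,G(x) = 0$ and $E_{13}(x) = E_{23}(x) = 0$ for a.e.\ $x\in\Omega^1$. Together with $E = \mathcal{L}_3(G)$ from Lemma \ref{lemMS}(iii) and the symmetry of $E^h$ (hence of $E$), the hypotheses of Lemma \ref{lemma4.1} are satisfied for a.e.\ $x\in\Omega^1$. Therefore:
\begin{equation*}
\mathcal{L}_2^{in}(G''(x',x_3)) = E''(x',x_3) - E_{33}(x',x_3)\,\mathrm{Id}_2 \qquad \text{for a.e. } x\in\Omega^1.
\end{equation*}

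Next, I would use (\ref{Lim_Gh}), which gives $G''(x',x_3) = G_0(x') - x_3 G_1(x')$ with $\mathrm{sym}\,G_0 = \mathrm{sym}\,\nabla w + \tfrac{1}{2}\nabla v\otimes\nabla v$ and $G_1 = \nabla^2 v$. Since $\mathcal{L}_2^{in}$ depends only on the symmetric part of its argument (which follows from the fact that $\mathcal{Q}_2^{in}$ is defined through the positive semidefinite form $\mathcal{Q}_3$, vanishing on antisymmetric matrices), and since $\nabla^2 v$ is already symmetric, the linearity of $\mathcal{L}_2^{in}$ gives:
\begin{equation*}
\mathcal{L}_2^{in}(G''(x',x_3)) = \mathcal{L}_2^{in}\!\left(\mathrm{sym}\,\nabla w + \tfrac{1}{2}\nabla v\otimes\nabla v\right) - x_3\,\mathcal{L}_2^{in}(\nabla^2 v).
\end{equation*}

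Finally, integrating this identity against $1$ and against $x_3$ over $(-1/2,1/2)$, and using $\int_{-1/2}^{1/2} 1\,\mathrm{d}x_3 = 1$, $\int_{-1/2}^{1/2} x_3\,\mathrm{d}x_3 = 0$, and $\int_{-1/2}^{1/2} x_3^2\,\mathrm{d}x_3 = 1/12$, yields the two identities in (\ref{aiuto}) directly. There is no real obstacle here; the only nontrivial ingredient is (\ref{property}), which is deferred to Section \ref{secGE}. Once the pointwise identity from Lemma \ref{lemma4.1} is in hand, the lemma reduces to the elementary integrations above.
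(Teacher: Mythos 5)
Your proposal is correct and follows essentially the same route as the paper: invoke the pointwise identity $\mathcal{L}_2^{in}(G'')=E''-E_{33}\mathrm{Id}_2$ from Lemma \ref{lemma4.1} (justified by $\mathrm{Tr}\,G=0$, $E_{13}=E_{23}=0$ and $E=\mathcal{L}_3(G)$), then use linearity of $\mathcal{L}_2^{in}$, the decomposition (\ref{Lim_Gh}), and the elementary moments of $1$, $x_3$, $x_3^2$ on $(-1/2,1/2)$. Nothing is missing.
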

\begin{proof}
By Lemma \ref{TrG=0}, Lemma \ref{E12=0}, Lemma \ref{lemma4.1} and
(\ref{Lim_Gh})  we see that:
\begin{equation*}
\begin{split}
& \int_{-1/2}^{1/2} (E'' -E_{33}\mathrm{Id}_2) ~\mathrm{d}x_3 =
\int_{-1/2}^{1/2} \mathcal{L}_2^{in}(G'')~\mathrm{d}x_3  \\
& \qquad\qquad  = \mathcal{L}_2^{in}\left( \int_{-1/2}^{1/2} G''(x',
x_3)~\mathrm{d}x_3
\right) = \mathcal{L}_2^{in} (G_0(x')) = \mathcal{L}_2^{in} (\mbox{sym }G_0(x'))\\
& \int_{-1/2}^{1/2} x_3(E'' -E_{33}\mathrm{Id}_2)~\mathrm{d}x_3 =
\int_{-1/2}^{1/2} x_3\mathcal{L}_2^{in}(G'')~\mathrm{d}x_3 \\
& \qquad\qquad = \mathcal{L}_2^{in}\left( \int_{-1/2}^{1/2}
x_3G''(x',
  x_3)~\mathrm{d}x_3\right)
= - \mathcal{L}_2^{in}\left( \int_{-1/2}^{1/2}
  x_3^2G_1(x')~\mathrm{d}x_3\right)
= - \frac{1}{12}\mathcal{L}_2^{in}(G_1(x')).
\end{split}
\end{equation*}
This concludes the proof, in view of (\ref{Lim_Gh}).
\end{proof}

\section{Two further properties of $G$ and $E$}\label{secGE}

In this section we derive the two fundamental properties of the
incompressible stress and strain, allowing for pointwise application
of Lemma \ref{lemma4.1}, and ultimately leading to formulas in
(\ref{aiuto}).

\begin{lemma}\label{TrG=0}
The limiting strain $G(x)$ is traceless, for almost every
$x\in\Omega^1$.
\end{lemma}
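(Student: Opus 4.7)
The plan is to exploit the pointwise incompressibility constraint together with the determinant expansion, and then pass to the weak $L^2$ limit using the truncation machinery of Lemma \ref{lemMS}(iv). Since $\det\nabla u^h=1$ and $R^h(x)\in SO(3)$ a.e., we get $\det(\mathrm{Id}+h^2 G^h)=1$ a.e. on $\Omega^1$. Expanding the determinant as a polynomial in $h^2G^h$ yields the pointwise algebraic identity
\[
\mathrm{Tr}\, G^h + h^2 \sigma_2(G^h) + h^4 \det G^h = 0,
\]
where $\sigma_2(F) = \tfrac{1}{2}\bigl((\mathrm{Tr}\, F)^2 - \mathrm{Tr}(F^2)\bigr)$ is the sum of the $2\times 2$ principal minors. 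Since $G^h\rightharpoonup G$ weakly in $L^2$, the first term tests correctly against $L^2$ functions, so the conclusion $\mathrm{Tr}\,G = 0$ will follow once the two remainders on the left are shown to vanish in a strong enough sense.

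The main obstacle is that $G^h$ is only $L^2$-bounded, so $\sigma_2(G^h)$ is a priori only in $L^1$ while $\det G^h$ has no a priori control in any Lebesgue space; thus the naive bound $\|h^2 \sigma_2(G^h)\|_{L^1}\leq Ch^2$ is too weak to combine with the $L^2$ weak convergence of $\mathrm{Tr}\, G^h$, and $h^4 \det G^h$ is not even obviously integrable. This is precisely what the truncation set $B_h = \{h^{2-\gamma}|G^h|\le 1\}$ is designed to remedy. The plan is to multiply the identity above by $\chi_h = \mathbf{1}_{B_h}$ and to use the pointwise bound $|G^h|\chi_h \le h^{-(2-\gamma)}$ to absorb one power of $|G^h|$, leaving only one power to pair against the $L^2$ norm. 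This should yield
\[
\|\chi_h h^2 \sigma_2(G^h)\|_{L^2} \le C h^{\gamma}\|G^h\|_{L^2} \le C h^{\gamma}, \qquad \|\chi_h h^4 \det G^h\|_{L^2} \le C h^{2\gamma}\|G^h\|_{L^2}\le Ch^{2\gamma},
\]
both tending to zero, so that $\chi_h\,\mathrm{Tr}\,G^h \to 0$ strongly in $L^2(\Omega^1)$.

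To finish, I need to match this with the correct weak limit. From $|\Omega^1\setminus B_h|\leq Ch^{2(2-\gamma)}$ in (\ref{quattro}), $\chi_h \to 1$ a.e. while $0\leq\chi_h\leq 1$, so for every $\varphi \in L^2(\Omega^1)$ dominated convergence gives $\chi_h\varphi \to \varphi$ strongly in $L^2$. Combined with the weak $L^2$ convergence of $\mathrm{Tr}\, G^h$ to $\mathrm{Tr}\, G$, this forces $\chi_h\,\mathrm{Tr}\,G^h \rightharpoonup \mathrm{Tr}\, G$ weakly in $L^2(\Omega^1)$. Matching this weak limit with the strong limit $0$ obtained in the previous step gives $\mathrm{Tr}\, G = 0$ almost everywhere, as desired.
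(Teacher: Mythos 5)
Your proposal is correct, and it follows the paper's skeleton — the same determinant identity $\mathrm{Tr}\,G^h + h^2\,\mathrm{Tr}\,\mathrm{cof}\,G^h + h^4\det G^h=0$ (your $\sigma_2$ is exactly $\mathrm{Tr}\,\mathrm{cof}$ for $3\times3$ matrices) and the same truncation sets $B_h$ — but it extracts the limit in a genuinely different, and somewhat cleaner, way. The paper must control $h^4\det G^h$ on the \emph{complement} $\Omega^1\setminus B_h$, which it does by feeding the identity back into itself ($h^4\det G^h=-\mathrm{Tr}\,G^h-h^2\,\mathrm{Tr}\,\mathrm{cof}\,G^h$ there) and applying Cauchy--Schwarz on the small set; it then concludes $\mathrm{Tr}\,G^h\to 0$ strongly in $L^1(\Omega^1)$, which forces the restriction $\gamma\in(\tfrac23,2)$ so that the $B_h$-estimate $Ch^{3\gamma-2}$ of the cubic term vanishes. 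You never touch the complement: you show $\chi_h\,\mathrm{Tr}\,G^h\to 0$ strongly in $L^2$ (your pointwise absorptions $\chi_h h^2|\sigma_2(G^h)|\le Ch^\gamma|G^h|$ and $\chi_h h^4|\det G^h|\le Ch^{2\gamma}|G^h|$ are correct), which works for every $\gamma\in(0,2)$, and then identify the weak limit of $\chi_h\,\mathrm{Tr}\,G^h$ with $\mathrm{Tr}\,G$ by moving $\chi_h$ onto the test function. What your route buys is a stronger mode of convergence on $B_h$, no constraint on $\gamma$, and no complement estimate; what the paper's route buys is the slightly stronger statement that $\mathrm{Tr}\,G^h\to 0$ in $L^1$ of the full cylinder. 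One small remark: $|\Omega^1\setminus B_h|\to 0$ gives convergence of $\chi_h$ to $1$ in measure, hence a.e.\ only along a subsequence if the $B_h$ are not nested; but you do not actually need a.e.\ convergence, since $\|\chi_h\varphi-\varphi\|_{L^2}^2=\int_{\Omega^1\setminus B_h}|\varphi|^2\to 0$ directly by absolute continuity of the integral. With that cosmetic fix your argument is complete.
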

\begin{proof}
Recall that $\nabla u^h(x', hx_3) = R^h(x') \big(\mbox{Id} + h^2
G^h(x', x_3)\big)$. Therefore:
$$1 = \det\nabla u^h = \det(\mbox{Id} + h^2G^h) = 1 + h^2\mbox{Tr } G^h
+ h^4 \mbox{Tr } \mbox{cof } G^h + h^6 \det G^h,$$ and consequently:
\begin{equation}\label{6}
\mbox{Tr } G^h + h^2\mbox{Tr } \mbox{cof } G^h + h^4 \det G^h = 0.
\end{equation}
Fix an exponent $\gamma\in (\frac{2}{3}, 2)$ and define $B_h =
\{x\in\Omega^1;~ h^{2-\gamma} |G^h(x) \leq 1\}$ as in Lemma
\ref{lemMS} (iv). Then:
\begin{equation*}
\begin{split}
\int_{\Omega^1\setminus B_h} |h^4\det G^h| & =
\int_{\Omega^1\setminus B_h} |\mbox{Tr } G^h + h^2\mbox{Tr }
\mbox{cof } G^h | \\ & \leq |\Omega^1\setminus B_h|^{1/2} \left(
\int_{\Omega^1\setminus B_h} |\mbox{Tr } G^h|^2\right)^{1/2} + h^2
\int_{\Omega^1} |\mbox{Tr } \mbox{cof } G^h| \leq C (h^{2-\gamma} +
h^2),
\end{split}
\end{equation*}
where we used (\ref{quattro}) and the boundedness of $G^h$ in
$L^2(\Omega^1)$. On the other hand, we have:
$$\int_{B_h} |h^4\det G^h| = \frac{h^4}{h^{6-3\gamma}} \int_{B_h}
|\det (h^{2-\gamma} G^h)| \leq C h^{3\gamma-2}.$$ Hence, by
(\ref{6}) and, again the boundedness of $\mbox{Tr } \mbox{cof } G^h$
in $L^1(\Omega^1)$, it follows that:
$$\int_{\Omega^1} |\mbox{Tr } G^h| \leq \int_{\Omega^1} |h^2\mbox{Tr }
\mbox{cof } G^h| + \int_{\Omega^1} |h^4 \det G^h| \to 0, \quad
\mbox{as } h\to 0.$$ Observing that $\mbox{Tr } G^h\rightharpoonup
\mbox{Tr } G$ weakly in $L^2(\Omega^1)$, we conclude that $\mbox{Tr
} G = 0$.
\end{proof}

\medskip

We now prove the remaining property of the strain $E$ in
(\ref{property}). The strategy of proof is the same as in the later
proofs of the Euler-Lagrange equations; we will apply the
equilibrium equation (\ref{Eq2}) to appropriate test functions
$\phi^h$, such that after passing to the limit with $h\to 0$ only
some chosen terms will survive, yielding the week formulation of
(\ref{property}). One difficulty with (\ref{Eq2}) is that it only
allows for globaly bounded $\phi^h$. For this reason, following
\cite{MS}, we introduce a family of truncation functions $\theta^h$
which coincide with the identity on intervals $(-\omega_h,
\omega_h)$ with a suitable rate of convergence of
$\omega_h\to\infty$.

\begin{lemma}\label{thetah}
Let $\{\omega_h\}$ be a sequence of positive numbers, increasing to
$+\infty$ as $h\to 0$. There exists a sequence of nondecreasing
functions $\theta^h\in\mathcal{C}_b^2(\mathbb{R},\mathbb{R})$ with
the following properties:
\begin{equation}\label{theta_prop}
\begin{split}
&\theta^h(t) = t \quad \forall |t|\leq \omega_h \qquad \mbox{and}
\qquad \theta^h(t) = (\mathrm{sgn}~t) \frac{3}{2}\omega_h\quad
\forall |t|\geq
2 \omega_h \\
& |\theta^h(t)|\leq t \quad \forall t \qquad \mbox{and}
\qquad \|\theta^h\|_{L^\infty} \leq \frac{3}{2}\omega_h\\
&  \|\frac{\mathrm{d}}{\mathrm{d}t}{\theta^h}\|_{L^\infty} \leq 1
\qquad \mbox{and} \qquad
\|\frac{\mathrm{d^2}}{\mathrm{d}t^2}{\theta^h} \|_{L^\infty} \leq
\frac{C}{\omega_h}.
\end{split}
\end{equation}
\end{lemma}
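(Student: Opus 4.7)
The plan is to construct $\theta^h$ by first specifying its derivative $\rho^h := (\theta^h)'$ as a scaled transition profile, and then defining $\theta^h$ as the antiderivative with $\theta^h(0) = 0$. Since all the prescribed conditions on $\theta^h$ can be read off directly from a pointwise bound on $\rho^h$, a bound on $(\rho^h)'$, and the integral of $\rho^h$ on the transition region, the construction reduces to choosing a single fixed profile and rescaling.

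Concretely, I would fix once and for all a function $\eta \in \mathcal{C}^1([0,1], [0,1])$ with $\eta(0)=1$, $\eta(1)=0$, $\eta'(0)=\eta'(1)=0$, monotone nonincreasing, and such that $\int_0^1 \eta(s)\,\mathrm{d}s = \tfrac{1}{2}$; such $\eta$ is easily built from a standard smooth bump. Define $\rho^h:\mathbb{R}\to[0,1]$ to be even, equal to $1$ on $[-\omega_h,\omega_h]$, equal to $0$ on $\mathbb{R}\setminus[-2\omega_h,2\omega_h]$, and on $[\omega_h,2\omega_h]$ equal to $\eta((t-\omega_h)/\omega_h)$. The vanishing of $\eta'$ at the endpoints ensures $\rho^h\in\mathcal{C}^1_b(\mathbb{R})$, hence $\theta^h(t) := \int_0^t \rho^h(s)\,\mathrm{d}s$ belongs to $\mathcal{C}^2_b(\mathbb{R},\mathbb{R})$ and is nondecreasing because $\rho^h\geq 0$.

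The remaining properties in \eqref{theta_prop} then follow by inspection. For $|t|\leq \omega_h$, $\rho^h\equiv 1$ gives $\theta^h(t) = t$; for $t\geq 2\omega_h$, $\theta^h(t) = \omega_h + \int_{\omega_h}^{2\omega_h}\eta((s-\omega_h)/\omega_h)\,\mathrm{d}s = \omega_h + \omega_h\int_0^1\eta = \tfrac{3}{2}\omega_h$, and the same value with opposite sign for $t\leq -2\omega_h$ by the evenness of $\rho^h$. Since $0\leq \rho^h\leq 1$ everywhere, we get both $\|(\theta^h)'\|_{L^\infty}\leq 1$ and $|\theta^h(t)|\leq |t|$, which also forces $\|\theta^h\|_{L^\infty}\leq \tfrac{3}{2}\omega_h$ in view of the explicit value on $|t|\geq 2\omega_h$. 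Finally, $(\theta^h)''(t)$ vanishes outside the transition bands and on those bands equals $\pm\tfrac{1}{\omega_h}\eta'((|t|-\omega_h)/\omega_h)$, so $\|(\theta^h)''\|_{L^\infty}\leq \|\eta'\|_{L^\infty}/\omega_h = C/\omega_h$. There is no real obstacle here; the only thing worth double-checking is that the normalization $\int_0^1\eta = \tfrac{1}{2}$ correctly produces the prescribed plateau value $\tfrac{3}{2}\omega_h$, which it does.
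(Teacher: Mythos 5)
Your construction is correct and is essentially the paper's proof in disguise: the paper writes down the explicit antiderivative whose transition profile is $\eta(s)=\tfrac12(1+\cos\pi s)$, which satisfies exactly your normalization $\int_0^1\eta=\tfrac12$ and endpoint conditions. Defining the derivative first and integrating is the same construction, just organized differently.
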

\begin{proof}
One may take:
\begin{equation*}
\theta^h(t) = \left\{\begin{array}{ll} t & \mbox{for } |t|\leq
    \omega_h\\ \displaystyle{(\mbox{sgn } t)
\frac{1}{2}\left(|t|+ \omega_h
+\frac{\omega_h}{\pi}\sin\left(\frac{\pi
      |t|-\omega_h}{\omega_h}\right)\right)} & \mbox{for } |t|\in[\omega_h,
2\omega_h]\\
\displaystyle{(\mbox{sgn } t) \frac{3}{2}\omega_h} & \mbox{for }
|t|\geq  \omega_h
\end{array}\right.
\end{equation*}
\end{proof}

\begin{lemma}\label{E12=0}
The limiting stress $E(x)$ satisfies: $E_{13}(x)=E_{23}(x)= 0$ for
almost every $x\in \Omega^1$.
\end{lemma}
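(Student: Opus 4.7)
The strategy is to apply the equilibrium identity (\ref{Eq2}) to a family of divergence-free test functions $\phi^h$ engineered so that, in the limit $h\to 0$, only the $(1,3)$ and $(2,3)$ entries of the stress $E$ survive. Concretely, given $g_1,g_2\in \mathcal{C}_b^2(\mathbb{R}^2,\mathbb{R})$ with compact support contained in $\Omega$, and $\eta\in\mathcal{C}_b^2(\mathbb{R},\mathbb{R})$ with compact support, I would take
\begin{equation*}
\phi^h(y) = \Big(h g_1(y')\eta(y_3/h),\; hg_2(y')\eta(y_3/h),\; -h^2\Lambda(y_3/h)\big(\partial_1 g_1 + \partial_2 g_2\big)(y')\Big),
\end{equation*}
where $\Lambda(t) = \int_{-\infty}^t \eta(s)\,\mathrm{d}s$; since $\eta$ has compact support, $\Lambda$ is bounded on $\mathbb{R}$. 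A direct computation using $\Lambda'=\eta$ shows $\mathrm{div}\,\phi^h = 0$ and $\phi^h\in\mathcal{C}_b^1(\mathbb{R}^3,\mathbb{R}^3)$. Among the entries of $\nabla\phi^h(y)$, only $g_1(y')\eta'(y_3/h)$ at position $(1,3)$ and $g_2(y')\eta'(y_3/h)$ at position $(2,3)$ are of order $1$; every other entry is $O(h)$ or $O(h^2)$.

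Next I would pass to the limit in (\ref{Eq2}). The right-hand side is $O(h^2)$ and vanishes, since each component of $\phi^h$ is bounded by $Ch$. On the left, I would use Lemma \ref{lemMS}(iv) to split $E^h = \chi_h E^h + (1-\chi_h)E^h$: the second piece contributes at most $O(h^{2-\gamma})\cdot\|\nabla\phi^h\|_{L^\infty} \to 0$, while $\chi_h E^h$ is bounded in $L^2$ and converges weakly to $E$. The conjugation by $(\bar R^h)^T R^h$, which tends to $\mathrm{Id}$ in every $L^q$ by (\ref{due}), together with (\ref{convi}) and dominated convergence, ensures that the test factor $g_i((y^h)')\eta'(y^h_3/h)$ converges strongly in $L^2(\Omega^1)$ to $g_i(x')\eta'(x_3+v(x'))$. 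The $O(h)$ and $O(h^2)$ entries of $\nabla\phi^h(y^h)$ paired against the $L^2$-bounded $\chi_h E^h$ drop out in the limit. The remaining contributions yield
\begin{equation*}
\int_{\Omega^1}\Big(E_{13}(x)\,g_1(x') + E_{23}(x)\,g_2(x')\Big)\,\eta'(x_3+v(x'))\,\mathrm{d}x = 0,
\end{equation*}
for every admissible pair $g_1,g_2$ and every admissible $\eta$.

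To convert this integral identity into pointwise vanishing, set $g_2 \equiv 0$ and invoke density in $g_1$ to obtain, for a.e. $x'\in\Omega$, the one-dimensional identity $\int_{-1/2}^{1/2} E_{13}(x',x_3)\eta'(x_3+v(x'))\,\mathrm{d}x_3 = 0$ for every $\eta$ ranging over a fixed countable dense set. Substituting $s = x_3 + v(x')$ (using $v\in W^{2,2}(\Omega)\hookrightarrow\mathcal{C}^0(\bar\Omega)$), the function $s\mapsto E_{13}(x',s-v(x'))\mathbf{1}_{|s-v(x')|<1/2}$ is compactly supported on $\mathbb{R}$ and is orthogonal to every continuous, compactly supported $\psi$ with $\int\psi=0$ (all such $\psi$ arise as $\eta'$ with $\eta$ of compact support). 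A standard argument then forces it to be a.e. constant, and by its compact support the constant must vanish. Fubini yields $E_{13}=0$ a.e. on $\Omega^1$, and the symmetric choice $g_1\equiv 0$ gives $E_{23}=0$ a.e. The principal technical difficulty lies in the limit passage on the left-hand side of (\ref{Eq2}): the conjugation by $(\bar R^h)^T R^h$, the weak-only convergence of the non-uniformly $L^2$-bounded $E^h$, and the mixed-scale structure of $\nabla\phi^h(y^h)$ must all be reconciled simultaneously.
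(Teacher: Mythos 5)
Your proposal is correct and follows the same overall strategy as the paper --- insert carefully scaled divergence-free test functions into (\ref{Eq2}) so that only the $(1,3)$ and $(2,3)$ entries of $\nabla\phi^h$ survive at order one, split $E^h$ via the sets $B_h$ of Lemma \ref{lemMS}(iv), and pair the weak $L^2$ limit of $\chi_h E^h$ against a strongly convergent test factor --- but it differs in two genuine ways. First, by choosing a separated-variables ansatz $g_i(y')\eta(y_3/h)$ with a \emph{compactly supported} vertical profile $\eta$, you make the primitive $\Lambda$ automatically bounded, so you bypass entirely the truncation functions $\theta^h$ of Lemma \ref{thetah} and the attendant estimates (\ref{convi2}) on the set $\{|y^h_3|/h\geq\omega_h\}$; the paper instead works with a general $\eta(x',x_3)$, whose primitive $\eta_3$ in (\ref{fuer}) grows linearly in $x_3$ and therefore forces the insertion of $\theta^h(x_3/h)$ and the control of the extra ${\theta^h}''$ terms. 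Second, your identification step is different: where the paper tests (\ref{drei}) against shifted functions $\phi(x',x_3-v_k(x'))$ with smooth $v_k\to v$, you change variables $s=x_3+v(x')$ fiberwise and use that a compactly supported $L^2$ function orthogonal to all mean-zero test functions is a vanishing constant; the mean-zero restriction (inevitable, since your $\psi$ arises as $\eta'$ with $\eta$ compactly supported) is exactly compensated by the compact support of the fiber of $E_{13}$, so the argument closes. The price of your route is the restriction to product test functions, repaid by the countable-density/Fubini localization you describe; the payoff is a shorter proof that avoids the $\omega_h$ bookkeeping. Two small points to make explicit in a final write-up: $\Lambda$ is only bounded (it equals $\int_{\mathbb R}\eta$ near $+\infty$), which suffices for $\phi^h\in\mathcal{C}^1_b$; and in the last step you should take $\psi=\eta'$ of class $\mathcal{C}^1$ (so that $\eta\in\mathcal{C}^2_b$), which is still dense enough among mean-zero compactly supported functions for the distributional argument $F'=0$.
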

\begin{proof}
{\bf 1.} Let $\eta=(\eta_1,
\eta_2)\in\mathcal{C}_b^2(\mathbb{R}^3,\mathbb{R}^2)$ be a given
test function, and define:
\begin{equation}\label{fuer}
\eta_3(x', x_3) = -\int_0^{x_3} \mbox{div }\eta(x', s)~\mbox{d}s.
\end{equation}
Since $\partial_3\eta_3 = -\mbox{div }\eta$, the following test
functions $\phi^h\in\mathcal{C}^1_{b}(\mathbb{R}^3,\mathbb{R}^3)$
are  divergence-free:
\begin{equation*}
\phi^h(x', x_3) = \left[\begin{array}{c}
\displaystyle{h{\theta^h}'\left(\frac{x_3}{h}\right)
    \eta \left(x', \theta^h\left(\frac{x_3}{h}\right)\right)}\vspace{2mm}\\
\displaystyle{h^2 \eta_3 \left(x',
\theta^h\left(\frac{x_3}{h}\right)\right)}\end{array}\right],
\end{equation*}
and denoting $\nabla_{tan}$ the gradient in the tangential
directions $e_1, e_2$, we have:
\begin{equation*}
\nabla\phi^h(x', x_3) = \left[\begin{array}{c|c}
\displaystyle{h{\theta^h}'\left(\frac{x_3}{h}\right)
    \nabla_{tan}\eta \left(x', \theta^h\left(\frac{x_3}{h}\right)\right)} &
\begin{minipage}{6cm}\[\displaystyle{ \left({\theta^h}'\left(\frac{x_3}{h}\right)\right)^2
\partial_3\eta \left(x', \theta^h\left(\frac{x_3}{h}\right)\right)} \] \[ \qquad \qquad + ~
\displaystyle{ {\theta^h}''\left(\frac{x_3}{h}\right) \eta \left(x',
\theta^h\left(\frac{x_3}{h}\right)\right)}\]\vspace{2mm}\end{minipage}\\\hline
\displaystyle{h^2 \nabla_{tan} \eta_3 \left(x',
\theta^h\left(\frac{x_3}{h}\right)\right)} &
\begin{minipage}{6cm}\[\vspace{2mm}\displaystyle{h {\theta^h}'\left(\frac{x_3}{h}\right)
\partial_3\eta_3 \left(x', \theta^h\left(\frac{x_3}{h}\right)\right)}\]\end{minipage}
\end{array}\right].
\end{equation*}
The truncations $\theta^h$ are chosen as in Lemma \ref{thetah} and
such that:
\begin{equation}\label{omegah}
\lim_{h\to 0}\omega_h = +\infty \quad \mbox{ and } \quad
h^2\omega_h\leq C.
\end{equation}

\medskip

{\bf 2.} Applying the equilibrium equation (\ref{Eq2}) with $\phi =
\phi^h$, we obtain:
\begin{equation}\label{bigeq1}
\begin{split}
 h\int_{\Omega^1}\Big\langle & \Big((\bar R^h)^T R^h E^h  (R^h)^T \bar
R^h\Big)'' - \Big((\bar R^h)^T R^h E^h (R^h)^T \bar
R^h\Big)_{33}\mbox{Id}_2 : {\theta^h}'\left(\frac{y^h_3}{h}\right)
    \nabla_{tan}\eta ({y^h}', \theta^h\left(\frac{y_3^h}{h}\right))\Big\rangle\\
& \qquad\qquad + \int_{\Omega^1}  \Big\langle \Big((\bar R^h)^T R^h
E^h (R^h)^T \bar R^h\Big)_{13,23} ,
({\theta^h}'\left(\frac{y^h_3}{h}\right))^2
    \partial_3\eta ({y^h}', \theta^h\left(\frac{y_3^h}{h}\right))\Big\rangle\\
& \qquad\qquad + \int_{\Omega^1}  \Big\langle \Big((\bar R^h)^T R^h
E^h (R^h)^T \bar R^h\Big)_{13,23} ,
{\theta^h}''\left(\frac{y^h_3}{h}\right)
    \eta ({y^h}', \theta^h\left(\frac{y_3^h}{h}\right))\Big\rangle\\
& \qquad\qquad + h^2 \int_{\Omega^1}  \Big\langle \Big((\bar R^h)^T
R^h E^h (R^h)^T \bar
R^h\Big)_{31,32} , \nabla_{tan}\eta_3 ({y^h}', \theta^h\left(\frac{y_3^h}{h}\right))\Big\rangle\\
%& \qquad\qquad\qquad\qquad\qquad\qquad\qquad\qquad
& = h^2\int_{\Omega^1} \Big\langle f(x') (\bar R^h)_{31,32} , \eta
({y^h}', \theta^h\left(\frac{y_3^h}{h}\right))\Big\rangle +
h^3\int_{\Omega^1} f(x')  (\bar R^h)_{33} \eta_3 ({y^h}',
\theta^h\left(\frac{y_3^h}{h}\right)).
\end{split}
\end{equation}

Now, we will discuss the convergence as $h\to 0$ of each term in
(\ref{bigeq1}). The first term converges to $0$, because $\Big((\bar
R^h)^T R^h E^h  (R^h)^T \bar R^h\Big)'' - \Big((\bar R^h)^T R^h E^h
(R^h)^T \bar R^h\Big)_{33}\mbox{Id}_2$ is bounded in $L^1(\Omega^1)$
in view of Lemma \ref{lemMS} (iii), while $
{\theta^h}'\left(\frac{y^h_3}{h}\right)
    \nabla_{tan}\eta ({y^h}', \theta^h\left(\frac{y_3^h}{h}\right))$
    is pointwise bounded by (\ref{theta_prop}).

\medskip

{\bf 3.}  The second term in (\ref{bigeq1}) when integrated over
$\Omega^1\setminus B_h$, goes to $0$ in view of (\ref{quattro}) and
of the pointwise boundedness of
$({\theta^h}'\left(\frac{y^h_3}{h}\right))^2
    \partial_3\eta ({y^h}', \theta^h\left(\frac{y_3^h}{h}\right))$ by
(\ref{theta_prop}). On the other hand, the limit of this integral
over $B_h$ is the same as the limit of:
\begin{equation}\label{help}
\int_{\Omega^1}  \Big\langle \chi_h E^h_{13, 23} ,
({\theta^h}'\left(\frac{y^h_3}{h}\right))^2
    \partial_3\eta ({y^h}',
    \theta^h\left(\frac{y_3^h}{h}\right))\Big\rangle~\mbox{d}x
\end{equation}
because of (\ref{due}). We now conclude that the integrals in
(\ref{help}) converge to:
$$\int_{\Omega^1}  \Big\langle E_{13, 23} , \partial_3\eta (x',
   x_3+ v(x'))\Big\rangle~\mbox{d}x.$$
This follows by recalling (\ref{cinque}) and observing that:
\begin{equation}\label{zwei}
({\theta^h}'\left(\frac{y^h_3}{h}\right))^2
    \partial_3\eta ({y^h}', \theta^h\left(\frac{y_3^h}{h}\right))
    \rightarrow
\partial_3\eta(x', x_3+ v(x')) \quad \mbox{ in } L^2(\Omega^1)
\end{equation}
Indeed:
\begin{equation*}
\begin{split}
\int_{\Omega^1}&\left|({\theta^h}'\left(\frac{y^h_3}{h}\right))^2
    \partial_3\eta ({y^h}', \theta^h\left(\frac{y_3^h}{h}\right))
    - \partial_3\eta(x', x_3+ v(x')) \right|^2 ~\mbox{d}x \\ & \leq
C\int_{\Omega^1}
\left|{\theta^h}'\left(\frac{y^h_3}{h}\right)\right|^4 \left(|{y^h}'
- x'|^2 + \left|\theta^h\left(\frac{y_3^h}{h}\right) -
    (x_3 + v(x'))\right|^2\right) ~\mbox{d}x  \\ &
\qquad + C\int_{\Omega^1}
\left|{\theta^h}'\left(\frac{y^h_3}{h}\right) - 1\right|^2
~\mbox{d}x \\
& \leq C \int_{\Omega^1} |{y^h}' - x'|^2 + \left|\frac{y_3^h}{h} -
    (x_3 + v(x'))\right|^2 ~\mbox{d}x
+ C \int_{\left\{x\in\Omega^1;~ \frac{|y^h_3|}{h}\geq
\omega_h\right\}} 1+ \left|\frac{y_3^h}{h} \right|^2 ~\mbox{d}x
\end{split}
\end{equation*}
converges to $0$ as $h\to 0$, by (\ref{convi}), (\ref{convi2}) and
(\ref{omegah}), proving hence (\ref{zwei}).

\medskip

{\bf 4.} The third term in (\ref{bigeq1}) is bounded by:
$\frac{C}{\omega_h} \int_{\Omega^1} |E^h|$ by (\ref{theta_prop}). It
therefore converges to $0$ in view of the boundedness of $E^h$ in
$L^1(\Omega^1)$ and (\ref{omegah}).

The fourth term in (\ref{bigeq1}) is bounded by:
\begin{equation*}
\begin{split}
Ch^2\int_{\Omega^1} |E^h|
\left|\theta^h\left(\frac{y^h_3}{h}\right)\right|~\mbox{d}x & \leq
Ch^2\omega_h\int_{\Omega^1\setminus B_h} |E^h| + Ch^2\int_{\Omega^1}
\chi_h|E^h| \frac{|y^h_3|}{h} \\ & \leq Ch^2\omega_h~ {o}(1) + Ch^2
\|\chi_h E^h\|_{L^2(\Omega^1)} \left\| \frac{y^h_3}{h}  \right
\|_{L^2(\Omega^1)},
\end{split}
\end{equation*}
and it converges to $0$ by (\ref{quattro}), (\ref{cinque}),
(\ref{omegah}) and the boundedness of $\frac{y^h_3}{h}$ in
$L^2(\Omega^1)$.

Finally, both terms in the right hand side of (\ref{bigeq1}) are
bounded by:
\begin{equation*}
\begin{split}
Ch^2\int_{\Omega^1} |f(x')|
\left(\left|{\theta^h}'\left(\frac{y^h_3}{h}\right)\right| + h
\left|\theta^h\left(\frac{y^h_3}{h}\right)\right| \right) ~\mbox{d}x
\leq Ch^2\int_{\Omega^1} |f(x')| (1 + h\omega_h) ~\mbox{d}x\leq Ch
\|f\|_{L^2(\Omega)},
\end{split}
\end{equation*}
which clearly converges to $0$. Above, we used (\ref{theta_prop})
and (\ref{omegah}).

\medskip

{\bf 5.} In conclusion, passing to the limit with $h\to 0$ in
(\ref{bigeq1}), results in:
\begin{equation}\label{drei}
\int_{\Omega^1}  \Big\langle E_{13, 23} , \partial_3\eta (x',
   x_3+ v(x'))\Big\rangle~\mbox{d}x = 0 \qquad\forall
   \eta\in\mathcal{C}^2_b(\mathbb{R}^3, \mathbb{R}^2).
\end{equation}
We now reproduce an argument from \cite{MS}, in order  to deduce
that $E_{13, 23}= 0.$ Take an arbitrary
$\phi\in\mathcal{C}^2_c(\Omega, \mathbb{R}^2).$ Let
$\mathcal{C}_c^\infty(\Omega, \mathbb{R})\ni  {v}_k\rightarrow v$ in
$L^2(\Omega)$, and define:
$$ \phi_k(x',x_3) = \phi(x', x_3 - v_k(x')), \qquad \eta(x', x_3) =
\int_0^{x_3} \phi_k(x', s) ~\mbox{d}s$$ Clearly $\phi_k\in
\mathcal{C}^2_c(\mathbb{R}^3, \mathbb{R}^2)$, $ \eta\in
\mathcal{C}^2_b(\mathbb{R}^3, \mathbb{R}^2)$, and thus by
(\ref{drei}) we obtain:
$$ 0 = \int_{\Omega^1}  \Big\langle E_{13, 23} , \phi_k (x',
   x_3+ v(x'))\Big\rangle~\mbox{d}x =  \int_{\Omega^1}  \Big\langle E_{13, 23} , \phi (x',
   x_3+ v(x')-v_k(x'))\Big\rangle~\mbox{d}x $$
Passing to the limit with $k\to\infty$, it follows that:
\begin{equation*}
\int_{\Omega^1}  E_{13, 23}  \phi (x', x_3)~\mbox{d}x = 0
\qquad\forall
   \phi\in\mathcal{C}^2_c(\Omega, \mathbb{R}^2)
\end{equation*}
which concludes the proof.
\end{proof}

\section{Derivation of the first Euler-Lagrange equation (\ref{EL1})}

{\bf 1.} Let $\eta=(\eta_1,
\eta_2)\in\mathcal{C}_b^2(\mathbb{R}^2,\mathbb{R}^2)$ be a given
test function, and let $\eta_3(x')=-\mbox{div }\eta(x')$. Given
$\theta^h$ as in Lemma \ref{thetah}, with:
\begin{equation}\label{omegah2}
\lim_{h\to 0}\omega_h = \lim_{h\to 0} h\omega_h^2= +\infty \quad
\mbox{ and } \quad h\omega_h \leq C,
\end{equation}
consider the following divergence-free test functions
$\phi^h\in\mathcal{C}^1_{b}(\mathbb{R}^3,\mathbb{R}^3)$:
\begin{equation*}
\phi^h(x', x_3) = \left[\begin{array}{c}
\displaystyle{{\theta^h}'\left(\frac{x_3}{h}\right)
    \eta (x')}\vspace{2mm}\\
\displaystyle{h {\theta^h}\left(\frac{x_3}{h}\right)\eta_3
(x')}\end{array}\right],
\end{equation*}
Denoting $\nabla_{tan}$ the gradient in the tangential directions
$e_1, e_2$, we have:
\begin{equation*}
\nabla\phi^h(x', x_3) = \left[\begin{array}{c|c}
\displaystyle{{\theta^h}'\left(\frac{x_3}{h}\right)
    \nabla_{tan}\eta (x')} & \begin{minipage}{3cm}\[\vspace{3mm}
\displaystyle{ \frac{1}{h}{\theta^h}''\left(\frac{x_3}{h}\right)
\eta (x')} \]\end{minipage} \\ \hline \displaystyle{h
{\theta^h}\left(\frac{x_3}{h}\right) \nabla_{tan} \eta_3 (x') }&
\begin{minipage}{3cm}\[\displaystyle{ {\theta^h}'\left(\frac{x_3}{h}\right)
\eta_3 (x')}\]\end{minipage}
\end{array}\right].
\end{equation*}

\medskip

{\bf 2.} Applying the equilibrium equation (\ref{Eq2}) with $\phi =
\phi^h$, we obtain:
\begin{equation}\label{bigeq2}
\begin{split}
 \int_{\Omega^1}\Big\langle & \Big((\bar R^h)^T R^h E^h  (R^h)^T \bar
R^h\Big)'' - \Big((\bar R^h)^T R^h E^h (R^h)^T \bar
R^h\Big)_{33}\mbox{Id}_2 : {\theta^h}'\left(\frac{y^h_3}{h}\right)
    \nabla_{tan}\eta ({y^h}')\Big\rangle\\
& \qquad\qquad + h \int_{\Omega^1}  \Big\langle \Big((\bar R^h)^T
R^h E^h (R^h)^T \bar R^h\Big)_{31,32} ,
{\theta^h}\left(\frac{y^h_3}{h}\right)
    \nabla_{tan}\eta_3 ({y^h}')\Big\rangle\\
& \qquad\qquad + \frac{1}{h} \int_{\Omega^1}  \Big\langle \Big((\bar
R^h)^T R^h E^h (R^h)^T \bar R^h\Big)_{13,23} ,
{\theta^h}''\left(\frac{y^h_3}{h}\right)
    \eta ({y^h}')\Big\rangle\\
& = h\int_{\Omega^1} \left\langle f(x') (\bar R^h)_{31,32} ,
{\theta^h}'\left(\frac{y^h_3}{h}\right) \eta
({y^h}')\right\rangle~\mbox{d}x +
 h^2\int_{\Omega^1} f(x') (\bar R^h)_{33} {\theta^h}\left(\frac{y^h_3}{h}\right)
\eta_3 ({y^h}')~\mbox{d}x.
\end{split}
\end{equation}

Now, we will check convergence as $h\to 0$ of each of the four terms
in the identity (\ref{bigeq2}). Regarding the first term, it
converges to $0$ when integrated over $\Omega^1\setminus B_h$, by
(\ref{quattro}) and by the pointwise boundedness of
${\theta^h}'\left(\frac{y^h_3}{h}\right)
    \nabla_{tan}\eta ({y^h}')$ in view of (\ref{theta_prop}). On the other
hand, the limit of this integral over $B_h$ is the same as the limit
of:
\begin{equation}\label{help2}
\int_{\Omega^1} \Big\langle \chi_h \big({E^h}'' -
E^h_{33}\mbox{Id}_2\big) : {\theta^h}'\left(\frac{y^h_3}{h}\right)
    \nabla_{tan}\eta ({y^h}')\Big\rangle~\mbox{d}x,
\end{equation}
because of the convergence in (\ref{due}). Now, the limit of
integrals in (\ref{help2}) equals:
$$\int_{\Omega^1} \Big\langle {E}'' -
E_{33}\mbox{Id}_2 : \nabla\eta (x')\Big\rangle~\mbox{d}x,$$ in view
of (\ref{cinque}) and:
\begin{equation*}
\begin{split}
\int_{\Omega^1}  &\left|{\theta^h}'\left(\frac{y^h_3}{h}\right)
    \nabla_{tan}\eta ({y^h}') - \nabla\eta(x')\right|^2~\mbox{d}x
\\ & \qquad\qquad \leq
C\int_{\Omega^1}  \left|\nabla_{tan}\eta ({y^h}') -
  \nabla\eta(x')\right|^2 + C\int_{\Omega^1}
\left|{\theta^h}'\left(\frac{y^h_3}{h}\right) -1\right|^2
\\ & \qquad \qquad \leq
C\int_{\Omega^1}  \left| {y^h}' - x'\right|^2~\mbox{d}x +
C\left|\left\{
  x\in\Omega^1;~ \frac{|y^h_3(x)|}{h}\geq \omega_h\right\}\right|
\\ & \qquad \qquad \leq
C\int_{\Omega^1}  \left| {y^h}' - x'\right|^2~\mbox{d}x +
\frac{C}{\omega_h^2},
\end{split}
\end{equation*}
where we apply (\ref{convi2}), and then (\ref{convi2}) and
(\ref{omegah2}) to conclude the convergence of both terms in the
right hand side of the above displayed expression to $0$.

\medskip

{\bf 3.} The second term in (\ref{bigeq2}) is bounded by:
\begin{equation*}
\begin{split}
Ch \int_{\Omega^1\setminus B_h}
{\theta^h}&\left(\frac{|y^h_3|}{h}\right) |E^h|~\mbox{d}x  + Ch
\int_{\Omega^1} |\chi_hE^h| \frac{|y^h_3|}{h} ~\mbox{d}x \\ & \leq
Ch\omega_h \int_{\Omega^1\setminus B_h} |E^h|~\mbox{d}x + C
\|y^h_3\|_{L^2(\Omega^1)} \|\chi_h E^h\|_{L^2(\Omega^1)}
\end{split}
\end{equation*}
and it clearly converges to $0$ by (\ref{quattro}), (\ref{cinque}),
(\ref{convi}) and (\ref{omegah2}).

The third term in (\ref{bigeq2}) is bounded by:
\begin{equation*}
\begin{split}
& \frac{C}{h\omega_h}\int_{\left\{
  x\in\Omega^1;~ \frac{|y^h_3(x)|}{h}\geq \omega_h\right\}} |E^h|
~\mbox{d}x \leq \frac{C}{h\omega_h}\int_{\left\{
  x\in\Omega^1;~ \frac{|y^h_3(x)|}{h}\geq \omega_h\right\}}
\frac{1}{h^2} W(\mbox{Id} + h^2G^h) + |G^h| ~\mbox{d}x \\ & \qquad
\leq \frac{C}{h^3\omega_h} \int_{\Omega^1} W\big(\nabla u^h(x',
hx_3)\big) ~\mbox{d}x + \frac{C}{h\omega_h} \|G^h\|_{L^2(\Omega^1)}
\left|\left\{
  x\in\Omega^1;~ \frac{|y^h_3(x)|}{h}\geq \omega_h\right\}\right|^{1/2}
\\ & \qquad \leq C\left(\frac{h}{\omega_h} + \frac{1}{h\omega_h^2}\right),
\end{split}
\end{equation*}
by (\ref{tre}), (\ref{convi2}), the boundedness of $G^h$ in
$L^2(\Omega^1)$ and (\ref{en_as}). Then, the right hand side above
converges to $0$ by (\ref{omegah2}).

Finally, the right hand side of (\ref{bigeq2}) converges to $0$ as
well, as it is bounded by:
$$Ch\int_{\Omega^1} |f(x')| (1+h\omega_h) ~\mbox{d}x
  \leq  Ch \|f\|_{L^2(\Omega)}.$$
In conclusion, passing to the limit with $h\to 0$ in (\ref{bigeq2})
we obtain:
\begin{equation}\label{krak0}
\int_{\Omega^1} \Big\langle {E}'' - E_{33}\mbox{Id}_2 : \nabla\eta
(x')\Big\rangle~\mbox{d}x=0 \quad \forall
\eta\in\mathcal{C}^2_b(\mathbb{R}^2, \mathbb{R}^2).
\end{equation}
and thus the Euler-Lagrange equation (\ref{EL1}) follows directly,
in view of (\ref{aiuto}) and the density of test functions $\eta$ as
above in $W^{1,2}(\Omega,\mathbb{R}^2)$.
\endproof

\section{Derivation of the second Euler-Lagrange equation (\ref{EL2})}

\begin{lemma}\label{lem7.1}
For every $\eta_3\in\mathcal{C}_b^3(\mathbb{R}^2,\mathbb{R})$, it
follows that:
\begin{equation}\label{Lbigeq3}
\begin{split}
\int_{\Omega^1}\Big\langle (E'' - E_{33}\mathrm{Id}_{2}) :
  \nabla v \otimes \nabla \eta_3\Big\rangle \mathrm{d}x
~+ ~& \lim_{h \to 0}\frac{1}{h}\int_{\Omega^1}\left\langle E^h_{31,
32},
  \nabla \eta_3 ({y^h}')\right \rangle \mathrm{d}x \\
& \qquad = \bar R_{33}\int_{\Omega}f(x')\eta_3(x')~\mathrm{d}x'.
\end{split}
\end{equation}
\end{lemma}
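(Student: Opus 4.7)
Our strategy mirrors the derivation of the first Euler-Lagrange equation in the preceding section: we test the equilibrium equation (\ref{Eq2}) with a divergence-free function and pass to the limit $h\to 0$, invoking the properties of $E^h$ and $G^h$ collected in Lemma \ref{lemMS} together with (\ref{convi}) and (\ref{CAh}). The natural test function here is simply $\phi(y):=\eta_3(y_1,y_2)\,e_3$, which lies in $\mathcal{C}^1_b(\mathbb{R}^3,\mathbb{R}^3)$ and satisfies $\mathrm{div}~\phi=0$ trivially, so no $y_3$-truncation is required. Since $\nabla\phi(y^h(x))=e_3\otimes\bigl(\nabla\eta_3((y^h)'),0\bigr)$, (\ref{Eq2}) divided by $h$ becomes
\begin{equation*}
\frac{1}{h}\int_{\Omega^1}\bigl[(\bar R^h)^T R^h E^h (R^h)^T\bar R^h\bigr]_{31,32}\cdot\nabla\eta_3((y^h)')\,\mathrm{d}x = \bar R^h_{33}\int_{\Omega^1} f(x')\,\eta_3((y^h)')\,\mathrm{d}x.
\end{equation*}

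Next I would expand $(\bar R^h)^T R^h=\mathrm{Id}+hA^h$ using (\ref{Ah}), splitting the left side into contributions of orders $h^{-1}$, $1$, and $h$, namely
\begin{align*}
&\frac{1}{h}\int E^h_{31,32}\cdot\nabla\eta_3((y^h)')\,\mathrm{d}x + \int\bigl(A^h E^h + E^h (A^h)^T\bigr)_{31,32}\cdot\nabla\eta_3((y^h)')\,\mathrm{d}x \\
&\qquad\qquad + h\int\bigl(A^h E^h (A^h)^T\bigr)_{31,32}\cdot\nabla\eta_3((y^h)')\,\mathrm{d}x.
\end{align*}
For the middle term I would split via $\chi_h$ from Lemma \ref{lemMS}(iv): the part on $\Omega^1\setminus B_h$ vanishes by (\ref{quattro}) and the $L^p$-boundedness of $A^h$, while the part on $B_h$ converges using $\chi_h E^h\rightharpoonup E$ in $L^2$ from (\ref{cinque}), $A^h\to A$ strongly in every $L^q$, and $\nabla\eta_3((y^h)')\to\nabla\eta_3(x')$ strongly (a consequence of (ii) of Theorem \ref{thmain}). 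Using the explicit form of $A$ in (\ref{CAh}) -- $A_{3j}=\partial_j v$, $A_{i3}=-\partial_i v$ for $i,j\in\{1,2\}$, rest vanishing -- together with the symmetry of $E''$ (Lemma \ref{lemMS}(ii)), a short index calculation gives $(AE+EA^T)_{3i}=(E''\nabla v)_i-E_{33}\partial_i v$ for $i=1,2$, whence the limit of this middle term equals $\int_{\Omega^1}\bigl\langle(E''-E_{33}\mathrm{Id}_2):(\nabla v\otimes\nabla\eta_3)\bigr\rangle\,\mathrm{d}x$. The right-hand side converges to $\bar R_{33}\int_\Omega f\eta_3\,\mathrm{d}x'$ by (i) and (ii) of Theorem \ref{thmain}. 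Once the cubic term is shown to vanish, every summand except $\frac{1}{h}\int E^h_{31,32}\cdot\nabla\eta_3((y^h)')\,\mathrm{d}x$ has a finite limit, so this limit exists and rearrangement produces exactly (\ref{Lbigeq3}).

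The main obstacle is the cubic term $h\int(A^h E^h (A^h)^T)_{31,32}\cdot\nabla\eta_3((y^h)')$. On $B_h$ it is controlled by $h\|A^h\|_{L^4}^2\|\chi_h E^h\|_{L^2}=O(h)$. On $\Omega^1\setminus B_h$, since $A^h$ is not uniformly bounded in $L^\infty$, I would use the crude estimate $|A^h|\leq C/h$ -- immediate from (\ref{Ah}) together with the uniform boundedness of $(\bar R^h)^T R^h-\mathrm{Id}$ as a difference of rotations -- combined with (\ref{quattro}) to bound
\begin{equation*}
h\int_{\Omega^1\setminus B_h}|A^h|^2|E^h|\;\leq\; h\cdot\frac{C}{h^2}\cdot Ch^{2-\gamma}\;=\;Ch^{1-\gamma},
\end{equation*}
which vanishes provided $\gamma\in(0,1)$ is fixed in the definition of $B_h$.
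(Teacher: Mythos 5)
Your proposal is correct and follows essentially the same route as the paper: test (\ref{Eq2}) with $\eta_3(y')e_3$ (up to the factor $1/h$, which the paper builds into $\phi^h$), expand $(\bar R^h)^TR^h=\mathrm{Id}+hA^h$, and pass to the limit using Lemma \ref{lemMS}, (\ref{convi}) and (\ref{CAh}), with the same index computation identifying $(AE+EA^T)_{31,32}$ with $(E''-E_{33}\mathrm{Id}_2)\nabla v$. The only difference is that the paper's asymmetric decomposition (\ref{RERA}) keeps $(R^h)^T\bar R^h$ intact in the first summand and therefore never produces the cubic term $hA^hE^h(A^h)^T$; your separate estimate of that term is correct, and note that the pointwise bound $|A^h|\le C/h$ combined with (\ref{quattro}) for $\gamma\in(0,1)$ --- rather than mere $L^p$-boundedness of $A^h$ --- is also what is needed to kill the linear terms on $\Omega^1\setminus B_h$, since $E^h$ is only controlled in $L^1$ there.
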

\begin{proof} {\bf 1.} Given $\eta_3\in\mathcal{C}_b^3(\mathbb{R}^2,\mathbb{R})$
consider the divergence-free test functions
$\phi^h\in\mathcal{C}_b^!(\mathbb{R}^3,\mathbb{R}^3)$:
\begin{equation*}
\phi^h(x', x_3) = \left[\begin{array}{c} 0 \vspace{2mm}\\
\displaystyle{\frac{1}{h} \eta_3 (x')}\end{array}\right], \quad
\mbox{so that} \quad \nabla\phi^h(x', x_3) =
\left[\begin{array}{c|c} 0 & \begin{minipage}{1cm}\[\vspace{3mm} 0
\]\end{minipage} \\ \hline \displaystyle{\frac{1}{h} \nabla_{tan}
\eta_3 (x') }&
\begin{minipage}{1cm}\[ \vspace{3mm}  0\]\end{minipage}
\end{array}\right].
\end{equation*}
Applying the equilibrium equation (\ref{Eq2}) with $\phi = \phi^h$,
we obtain:
\begin{equation}\label{bigeq3}
 \frac{1}{h}\int_{\Omega^1}\Big\langle \Big((\bar R^h)^T R^h E^h  (R^h)^T \bar
R^h\Big)_{31, 32} , \nabla_{tan}\eta_3 ({y^h}')\Big\rangle
~\mbox{d}x = \bar R_{33}^h\int_{\Omega^1} f(x') \eta_3
({y^h}')~\mbox{d}x.
\end{equation}
Recall that the tensor field $A^h$ in (\ref{Ah}) is defined as: $A^h
(x') = \frac{1}{h}\left((\bar R^h)^T R^h(x') - \mbox{Id}\right)$.
Hence:
\begin{equation}\label{RERA}
\begin{split}
\frac{1}{h}(\bar R^h)^T R^hE^h(R^h)^T \bar R^h = A^hE^h(R^h)^T \bar
R^h + E^h(A^h)^T + \frac{1}{h}E^h,
\end{split}
\end{equation}
and therefore the left hand side of (\ref{bigeq3}) can be written
as:
\begin{equation}\label{momo}
\begin{split}
\int_{\Omega^1}&\left\langle (A^hE^h(R^h)^T \bar R^h)_{31, 32}, \nabla \eta_3({y^h}')\right\rangle ~\mbox{d}x\\
&+ \int_{\Omega^1}\left\langle (E^h(A^h)^T)_{31, 32}, \nabla \eta_3
  ({y^h}')\right\rangle  ~\mbox{d}x
+ \frac{1}{h} \int_{\Omega^1} \left\langle E_{31, 32}^h, \nabla
\eta_3 ({y^h}')\right\rangle ~\mbox{d}x.
\end{split}
\end{equation}

\medskip

{\bf 2.} Let the sets $B_h$ be defined as in Lemma \ref{lemMS} (iv),
for some exponent $\gamma\in (0,1)$. The first two terms in
(\ref{momo}), when considered on $\Omega^1\setminus B_h$, converge
to $0$ because they are bounded by:
$$C\int_{\Omega^1\setminus B_h} |A^h| |E^h| ~\mbox{d}x
\leq \frac{C}{h} \int_{\Omega^1\setminus B_h} |E^h| ~\mbox{d}x \leq
\frac{C}{h} h^{2-\gamma},$$ in view of (\ref{quattro}) and
$|A^h|\leq\frac{C}{h}$. On the other hand, the same two terms while
on $B_h$, converge to:
$$\int_{\Omega^1} \left\langle (AE)_{31, 32} ,
  \nabla\eta_3(x')\right\rangle + \left\langle (EA^T)_{31, 32} ,
  \nabla\eta_3(x')\right\rangle ~\mbox{d}x,$$
where we used the convergence (\ref{cinque}) and the following
strong convergences in $L^{3}(\Omega^1)$: of $A^h$ to $A$ by
(\ref{CAh}), of $(R^h)^T\bar R^h$ to $\mbox{Id}$ by (\ref{due}), and
of $\nabla\eta_3({y^h}')$ to $\nabla \eta_3(x')$ in view of the
Sobolev embedding and the strong convergence in $W^{1,2}(\Omega^1,
\mathbb{R}^2)$ in (\ref{convi}).

Concluding, the first two terms in (\ref{momo}) converge to:
$$\int_{\Omega^1} \Big\langle E'' \nabla v , \nabla\eta_3(x')\Big\rangle
- \Big\langle E_{33} \nabla v , \nabla\eta_3(x')\Big\rangle
~\mbox{d}x$$ in view of the structure of the limiting tensor $A$ in
(\ref{CAh}). Since the right hand side of (\ref{bigeq3}) converges
to $\bar R_{33} \int_\Omega f(x') \eta_3(x')$ by (\ref{convi}),
passing to the limit in all terms of  (\ref{bigeq3}) yields the
desired equality (\ref{Lbigeq3}) and thus proves the lemma.
\end{proof}

\begin{lemma}\label{lem7.2}
For every $\eta\in\mathcal{C}_b^2(\mathbb{R}^2,\mathbb{R}^2)$, it
follows that:
\begin{equation}\label{Lbigeq4}
\begin{split}
\int_{\Omega^1}&\Big\langle (E'' - E_{33}\mathrm{Id}_{2})
  : (x_3 + v(x'))\nabla_{tan}\eta(x')\Big\rangle ~\mathrm{d}x \\ &
+ \int_{\Omega^1}\Big\langle (E'' - E_{33}\mathrm{Id}_{2}) : \nabla
v(x') \otimes\eta(x')\Big\rangle ~ \mathrm{d}x ~+ ~ \lim_{h \to
0}\frac{1}{h}\int_{\Omega^1}\left\langle E^h_{13, 23},
  \nabla \eta_3 ({y^h}')\right \rangle ~\mathrm{d}x = 0.
\end{split}
\end{equation}
\end{lemma}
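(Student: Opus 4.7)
The plan is to parallel the derivation of (\ref{Lbigeq3}): pick a divergence-free test function $\phi^h\in\mathcal{C}_b^1(\mathbb{R}^3,\mathbb{R}^3)$, plug it into (\ref{Eq2}), expand via (\ref{RERA}), and pass to the limit $h\to 0$. The three surviving terms in (\ref{Lbigeq4}) dictate the ansatz: the factor $(x_3+v)$ in the first integrand should arise from $\theta^h(y_3^h/h)\to x_3+v$ via (\ref{convi}); the tensor $\nabla v\otimes\eta$ in the second integrand should arise from the off-diagonal blocks of $A=\lim A^h$ in (\ref{CAh}); and the $1/h$-rescaled residual should arise from the $(1,3)$-column of $\nabla\phi^h$ paired with the bare $(1/h)E^h$ piece of (\ref{RERA}).

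Concretely, given $\eta\in\mathcal{C}_b^2(\mathbb{R}^2,\mathbb{R}^2)$, set $\eta_3:=-\mathrm{div}\,\eta$ and take
\[
\phi^h(y_1,y_2,y_3)=\left(\begin{array}{c}\theta^h(y_3/h)\,\eta(y_1,y_2)\\ h\,\Theta^h(y_3/h)\,\eta_3(y_1,y_2)\end{array}\right),
\]
where $\theta^h$ is as in Lemma~\ref{thetah} with $\omega_h\to\infty$ satisfying $h\omega_h\to 0$, $h\omega_h^2\to 0$, and $h^{2-\gamma}\omega_h\to 0$ for some $\gamma\in(0,1)$ (e.g.\ $\omega_h=h^{-1/3}$, $\gamma=1/2$), and where $\Theta^h(t)$ is a smooth bounded function coinciding with the antiderivative $\int_0^t\theta^h$ on $[-2\omega_h,2\omega_h]$. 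Divergence-freeness reduces to $\theta^h(y_3/h)(\mathrm{div}\,\eta+\eta_3)=0$, which holds by construction, and $\phi^h\in\mathcal{C}_b^1$ thanks to $h\omega_h,\,h^2\omega_h^2\leq C$. Plugging $\phi^h$ into (\ref{Eq2}) and expanding $(\bar R^h)^T R^hE^h(R^h)^T\bar R^h=E^h+h\bigl(A^hE^h(R^h)^T\bar R^h+E^h(A^h)^T\bigr)+h^2A^hE^h(A^h)^T$ from (\ref{Ah})--(\ref{RERA}), the left-hand side splits into $3\times 4=12$ integrals indexed by the three pieces of the expansion and the four blocks of $\nabla\phi^h(y^h)$: the $(1,1)$-block $\theta^h(y_3^h/h)\nabla\eta(y^{h'})$, the $(1,3)$-column $(1/h)\theta^{h\prime}(y_3^h/h)\eta(y^{h'})$, the $(3,1)$-row $h\Theta^h(y_3^h/h)\nabla\eta_3(y^{h'})$, and the $(3,3)$-entry $\theta^h(y_3^h/h)\eta_3(y^{h'})$.

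Three of these twelve integrals survive in the limit: the $(1,1)$ and $(3,3)$ contributions of the bare $E^h$ piece combine, via $\eta_3=-\mathrm{div}\,\eta$ and (\ref{cinque}), into the first summand of (\ref{Lbigeq4}); the $(1,3)$ contributions of the $hA^hE^h(R^h)^T\bar R^h$ and $hE^h(A^h)^T$ pieces combine, using the block structure of $A$ from (\ref{CAh}) together with a direct computation $(AE+EA^T)_{13,23}=(E''-E_{33}\mathrm{Id}_2)\nabla v$, into the second summand; and the $(1,3)$ contribution of the bare $E^h$ piece furnishes the residual limit appearing on the last line of (\ref{Lbigeq4}). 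The main technical obstacle is ruling out the remaining nine integrals and the right-hand side $h\int\langle fe_3,\bar R^h\phi^h(y^h)\rangle=O(h\omega_h)+O(h^2\omega_h^2)$. For each, I split $\Omega^1=B_h\cup(\Omega^1\setminus B_h)$ from Lemma~\ref{lemMS}(iv): on $\Omega^1\setminus B_h$ I use the $L^1$-bound (\ref{quattro}) together with $|A^h|\leq C/h$, and on $B_h$ I use the weak $L^2$-convergence $\chi_hE^h\rightharpoonup E$ coupled with the strong $L^2$-convergences of $\theta^h(y_3^h/h)$, $\Theta^h(y_3^h/h)$, $A^h$, and $\nabla\eta(y^{h'})$ that follow from (\ref{convi})--(\ref{convi2}) and (\ref{CAh}). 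The calibration of $\omega_h$ is precisely what forces every error prefactor, $\omega_h h^{2-\gamma}$, $h\omega_h$, $h\omega_h^2$, and $h^2\omega_h^2$, to tend to zero.
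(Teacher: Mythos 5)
Your overall plan (a divergence-free $\phi^h$ in (\ref{Eq2}), the decomposition (\ref{RERA}), the split over $B_h$ and $\Omega^1\setminus B_h$) is indeed the paper's strategy, and your identification of which three blocks survive is correct; but two concrete points break the argument as written.

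First, your test function is not admissible. One has $\mathrm{div}\,\phi^h=\theta^h(y_3/h)\,\mathrm{div}\,\eta+(\Theta^h)'(y_3/h)\,\eta_3$, so divergence-freeness forces $(\Theta^h)'=\theta^h$ on all of $\mathbb{R}$, not merely on $[-2\omega_h,2\omega_h]$. Since $\theta^h\equiv\pm\frac32\omega_h$ outside that interval, the true antiderivative grows linearly and is unbounded, so $\Theta^h$ cannot be both bounded and a global antiderivative of $\theta^h$; on the set where it is not, $\phi^h$ fails to be divergence-free, and (\ref{Eq2}) — which comes from incompressible inner variations — only admits globally divergence-free, bounded fields. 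The paper circumvents this by putting ${\theta^h}'\theta^h$ in the tangential components and $\frac h2(\theta^h)^2$ in the third: the latter is a genuine global antiderivative and is bounded (by $\frac98 h\omega_h^2$) because $\theta^h$ itself is bounded. Note that only boundedness for each fixed $h$ is needed, so no smallness condition on $h\omega_h^2$ is required for admissibility.

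Second, your calibration of $\omega_h$ is backwards. The dangerous term is the $(1,3)$-column of $\nabla\phi^h$, which carries a factor $\frac1h$; writing ${\theta^h}'=1+({\theta^h}'-1)$, the error is supported on $\{|y_3^h|/h\geq\omega_h\}$, and the only available bound, via (\ref{tre}), (\ref{en_as2}) and (\ref{convi2}), is $\frac Ch\int_{\{|y_3^h|/h\geq\omega_h\}}|E^h|\leq C\big(h+\frac1{h\omega_h}\big)$. This vanishes only if $h\omega_h\to+\infty$, which is exactly the paper's condition (\ref{omegah3}) and the opposite of your requirement $h\omega_h\to0$ (with $\omega_h=h^{-1/3}$ the error is of order $h^{-2/3}$). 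Trying weak-times-strong convergence on $B_h$ instead runs into the same $\frac1h$ prefactor. Once $\omega_h$ is forced to grow faster than $1/h$, your remaining error bounds of order $h\omega_h$ and $h\omega_h^2$ no longer vanish for free, and one needs the finer estimates of the paper: pairing $\chi_hE^h$ against $|y_3^h/h|^2$ in $L^2\times L^2$ on $B_h$, and the upper calibration $h^{1+\frac{1-\gamma}{2}}\omega_h\to0$ to handle $\Omega^1\setminus B_h$ via (\ref{quattro}). It is this two-sided window on $\omega_h$, absent from your proposal, that makes the proof work.
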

\begin{proof}
{\bf 1.} Let $\eta\in\mathcal{C}_b^2(\mathbb{R}^2,\mathbb{R}^2)$ be
a given test function, and define $\eta_3(x')=-\mbox{div }\eta(x')$.
Given $\theta^h$ as in Lemma \ref{thetah}, with:
\begin{equation}\label{omegah3}
\lim_{h\to 0}\omega_h = \lim_{h\to 0} h\omega_h = +\infty \quad
\mbox{ and } \quad \lim_{h\to 0} h^{1+\frac{1-\gamma}{2}}\omega_h =
0 ~~\mbox{ for some
  fixed } \gamma\in (0,1),
\end{equation}
consider the divergence-free test functions
$\phi^h\in\mathcal{C}^1_{b}(\mathbb{R}^3,\mathbb{R}^3)$:
\begin{equation*}
\phi^h(x', x_3) = \left[\begin{array}{c}
\displaystyle{{\theta^h}'\left(\frac{x_3}{h}\right)
    {\theta^h}\left(\frac{x_3}{h}\right) \eta (x')}\vspace{2mm}\\
\displaystyle{\frac{h}{2}
({\theta^h}\left(\frac{x_3}{h}\right))^2\eta_3
(x')}\end{array}\right].
\end{equation*}
Denoting $\nabla_{tan}$ the gradient in the tangential directions
$e_1, e_2$, we have:
\begin{equation*}
\nabla\phi^h(x', x_3) = \left[\begin{array}{c|c}
\displaystyle{{\theta^h}'\left(\frac{x_3}{h}\right)
    {\theta^h}\left(\frac{x_3}{h}\right) \nabla_{tan}\eta (x')} & \begin{minipage}{7cm}\[\vspace{3mm}
\displaystyle{
\frac{1}{h}\left({\theta^h}''\left(\frac{x_3}{h}\right)
    {\theta^h}\left(\frac{x_3}{h}\right) + ({\theta^h}\left(\frac{x_3}{h}\right) )^2\right)
\eta (x')} \]\end{minipage} \\ \hline \displaystyle{\frac{h}{2}
({\theta^h}\left(\frac{x_3}{h}\right))^2 \nabla_{tan} \eta_3 (x') }&
\begin{minipage}{7cm}\vspace{3mm}\[\displaystyle{
    {\theta^h}'\left(\frac{x_3}{h}\right) {\theta^h}\left(\frac{x_3}{h}\right)
\eta_3 (x')}\]\end{minipage}
\end{array}\right].
\end{equation*}

\medskip

{\bf 2.} Applying now the equilibrium equation (\ref{Eq2}) with
$\phi = \phi^h$, we obtain:
\begin{equation}\label{bigeq4}
\begin{split}
 \int_{\Omega^1}\Big\langle & \Big((\bar R^h)^T R^h E^h  (R^h)^T \bar
R^h\Big)'' - \Big((\bar R^h)^T R^h E^h (R^h)^T \bar
R^h\Big)_{33}\mbox{Id}_2 : {\theta^h}'\left(\frac{y^h_3}{h}\right)
{\theta^h}\left(\frac{y^h_3}{h}\right)
    \nabla_{tan}\eta ({y^h}')\Big\rangle\\
& \qquad\quad + \frac{1}{h} \int_{\Omega^1}  \Big\langle \Big((\bar
R^h)^T R^h E^h (R^h)^T \bar R^h\Big)_{13,23} ,
\left({\theta^h}''\left(\frac{x_3}{h}\right)
    {\theta^h}\left(\frac{x_3}{h}\right) + ({\theta^h}\left(\frac{x_3}{h}\right) )^2\right)
    \eta ({y^h}')\Big\rangle\\
& \qquad\quad + \frac{h}{2} \int_{\Omega^1}  \Big\langle \Big((\bar
R^h)^T R^h E^h (R^h)^T \bar R^h\Big)_{31,32} ,
({\theta^h}\left(\frac{y^h_3}{h}\right))^2
    \nabla_{tan}\eta ({y^h}')\Big\rangle\\
& = h\int_{\Omega^1} \Big\langle f(x') (\bar R^h)_{31, 32} ,
  {\theta^h}'\left(\frac{y_3^h}{h}\right)\theta^h\left(\frac{y^h_3}{h}\right)
\eta({y^h}')\Big\rangle~\mbox{d}x \\
& \qquad\qquad + \frac{h^2}{2} \int_{\Omega^1}  f(x')  (\bar
R^h)_{33} ({\theta^h}\left(\frac{y^h_3}{h}\right))^2 \eta_3
  ({y^h}')~\mbox{d}x.
\end{split}
\end{equation}

In what follows, we will check convergence as $h\to 0$ of each of
the five terms in the identity (\ref{bigeq4}). We first easily
notice that the two terms in the right hand side converge to $0$, as
they are bounded by:
\begin{equation*}
\begin{split}
C\int_{\Omega^1}|f(x')|\Big( h \left|\theta^h
    \left(\frac{y_3^h}{h}\right)\right| + h^2 \left|\theta^h
\left(\frac{y_3^h}{h}\right)\right|^2\Big)~\mbox{d}x
& \leq C \int_{\Omega^1} |f(x')| \left( |y_3^h| + |y_3^h|^2\right)~\mbox{d}x \\
& \leq C\|f\|_{L^2(\Omega^1)} \left( \|y_3^h\|_{L^2(\Omega^1)} +
\|y_3^h\|^2_{L^4(\Omega^1)}\right).
\end{split}
\end{equation*}
Since $\frac{y_3^h}{h} $ has a strong limit in $W^{1, 2}(\Omega^1)$
by (\ref{convi}), it results that $\|y_3^h\|_{L^2}$ and
$\|y_3^h\|_{L^4}$ converge to $0$.

\medskip

{\bf 3.} The third term in (\ref{bigeq4}) is bounded by the
following expression, in view of (\ref{theta_prop}), (\ref{cinque}),
(\ref{convi}) and (\ref{quattro}):
\begin{equation*}
\begin{split}
 C h \int_{\Omega^1} \chi_h |E^h| & (\theta^h\left(\frac{y_3^h}{h}\right))^2 ~\mbox{d}x
+ C h \int_{\Omega^1} (1 - \chi_h) |E^h|
(\theta^h\left(\frac{y_3^h}{h}\right))^2 ~\mbox{d}x
\\ & \leq Ch \int_{\Omega^1}\chi_h |E^h| \left|\frac{y_3^h}{h}\right|^2 ~\mbox{d}x +
Ch\omega^2_h \int_{\Omega^1\setminus B_h } |E^h|~\mbox{d}x \\
&\leq C h \|\chi_h E^h\|_{L^2}
\left\|\frac{y^h_3}{h}\right\|^2_{L^4} + Ch\omega_h^2 h^{2-\gamma}
\leq Ch+ C\left(h^{1 + \frac{1-\gamma}{2}}
  \omega_h\right)^2
\end{split}
\end{equation*}
which converges to $0$ by (\ref{omegah3}).

\medskip

{\bf 4.}  We will now investigate the first term in (\ref{bigeq4}).
Integrated on $\Omega^1\setminus B_h$, it is bounded by:
$$C\omega_h\int_{\Omega^1} (1-\chi_h) |E^h|~\mbox{d}x \leq C\omega_h
h^{2-\gamma} \leq Ch^{1+\frac{1-\gamma}{2}}\omega_h,$$ by
(\ref{quattro}) and hence it converges to $0$ through
(\ref{omegah3}). The same term integrated on $B_h$ equals now the
following sum:
\begin{equation}\label{koko}
\begin{split}
&\int_{\Omega^1} \left({\theta^h}'\left(\frac{y_3^h}{h}\right)
  - 1\right)\theta^h\left(\frac{y_3^h}{h}\right) \cdot
\\ &\qquad \cdot \Big\langle \left((\bar
  R^h)^T R^h \chi_h E^h (R^h)^T \bar R^h\right)''
- \left((\bar R^h)^T R^h \chi_h E^h (R^h)^T \bar R^h\right)_{33}
\mbox{Id}_2 : \nabla_{tan}\eta({y^h}')\Big\rangle~\mbox{d}x\\
&  + \int_{\Omega^1} \theta^h\left(\frac{y_3^h}{h}\right) \cdot \\ &
\qquad \cdot \Big\langle \left((\bar
  R^h)^T R^h \chi_h E^h (R^h)^T \bar R^h\right)''
- \left((\bar R^h)^T R^h \chi_h E^h (R^h)^T \bar R^h\right)_{33}
\mbox{Id}_2 : \nabla_{tan}\eta({y^h}')\Big\rangle~\mbox{d}x.
\end{split}
\end{equation}
The first term in (\ref{koko}) goes to $0$, as it is bounded by:
$$ C \int_{\left\{\frac{|y_3^h|}{h} \geq
    \omega_h\right\}}\left|\frac{y_3^h}{h}\right| |\chi_h E^h|~\mbox{d}x
\leq C
\left\|\frac{y_3^h}{h}\right\|_{L^4(\Omega^1)}\left|\left\{x\in
    \Omega^1; ~ \frac{|y_3^h|}{h}\geq
    \omega_h\right\}\right|^{1/4}\|\chi_h E^h\|_{L^2(\Omega^1)} \leq
\frac{C}{\omega_h^{1/2}}, $$ in view of (\ref{theta_prop}),
(\ref{convi2}), (\ref{cinque}) and recalling (\ref{omegah3}). The
second term of (\ref{koko}) converges to:
\begin{equation}\label{koko2}
\int_{\Omega^1}\Big\langle E'' - E_{33}\mbox{Id}_2 :  (x_3 + v(x'))
  \nabla_{tan}\eta({x}') \Big\rangle ~\mbox{d}x
\end{equation}
because of (\ref{cinque}) and through the following strong
convergences: convergence of $\nabla_{tan}\eta({y^h}')$ to
$\nabla_{tan}\eta (x')$ in $L^5(\Omega^1)$ by (\ref{convi}), of
$(\bar R^h)^T R^h$ to $\mbox{Id}$ in $L^{20}(\Omega)$ by
(\ref{due}), and of $\theta^h\left(\frac{y^h_3}{h}\right)$ to $(x_3
+ v(x'))$ in
  $L^5(\Omega^1)$. The last convergence can be seen from:
\begin{equation*}
\begin{split}
\int_{\Omega^1}\left|\theta^h\left(\frac{y_3^h}{h}\right) - (x_3 +
  v(x'))\right|^5 ~\mbox{d}x& \leq
C\int_{\Omega^1}\left|\theta^h\left(\frac{y_3^h}{h}\right) -
  \frac{y_3^h}{h}\right|^5~\mbox{d}x + C\int_{\Omega^1} \left|\frac{y_3^h}{h}-
  (x_3 + v(x'))\right|^5~\mbox{d}x\\
& \leq C \int_{\left\{\frac{|y_3^h|}{h}\geq
    \omega_h\right\}}\left|\frac{y_3^h}{h}\right|^5~\mbox{d}x + o(1)
 \leq \frac{C}{\omega_h^{1/3}} + o(1) \leq o(1)
\end{split}
\end{equation*}
by (\ref{convi}), (\ref{convi2}) and (\ref{omegah3}). Concluding, we
obtain that the first term in (\ref{bigeq4}) converges to the
expression in (\ref{koko2}).

\medskip

{\bf 5.} Regarding the second term in (\ref{bigeq4}), using
(\ref{tre}), (\ref{theta_prop}), (\ref{en_as2}) and (\ref{convi2})
we note that:
\begin{equation*}
\begin{split}
\Big
|\int_{\Omega^1}&\left({\theta^h}''\left(\frac{y_3^h}{h}\right)\theta^h\left(\frac{y_3^h}{h}\right)
  + {\theta^h}'\left(\frac{y_3^h}{h}\right)^2 - 1\right) \Big\langle
  \left((\bar R^h)^T R^h E^h (R^h)^T \bar R^h\right)_{13, 23},
  \eta({y^h}')\Big\rangle  ~\mbox{d}x\Big|\\
& \qquad \leq \frac{C}{h}\int_{\left\{x\in\Omega^1;
~\frac{|y_3^h(x)|}{h}\geq
    \omega_h\right\}}\left(\frac{1}{\omega_h} \omega_h +
  1\right)|E^h| ~\mbox{d}x\\ & \qquad \leq \frac{C}{h} \int_{\left\{\frac{|y_3^h|}{h}\geq
    \omega_h\right\}} \frac{1}{h^2} W(\nabla u^h(x', hx_3)) + |G^h|
~\mbox{d}x \\ & \qquad \leq \frac{C}{h} \left(h^2 +
\|G^h\|_{L^2(\Omega^1}\left|\left\{x\in\Omega^1; ~
\frac{|y^h(x)|}{h} \geq
      \omega_h\right\}\right|^{1/2}\right) \leq \frac{C}{h}\left(h^2 +
  \frac{1}{\omega_h}\right),
\end{split}
\end{equation*}
which converges to $0$ by (\ref{omegah3}). The remaining part of the
second term in (\ref{bigeq4}) is:
\begin{equation}\label{pupu}
\begin{split}
\frac{1}{h}\int_{\Omega^1}& \Big\langle \left((\bar R^h)^T R^h E^h
(R^h)^T
    \bar R^h\right)_{13, 23}, \eta({y^h}')\Big\rangle ~\mbox{d}x \\ &
= \int_{\Omega^1}\Big\langle \left(A^h E^h (R^h)^T
    \bar R^h\right)_{13, 23}, \eta({y^h}')\Big\rangle ~\mbox{d}x  +
\int_{\Omega^1}\Big\langle \left(E^h (A^h)^T\right)_{13, 23},
  \eta({y^h}')\Big\rangle ~\mbox{d}x \\ &
\qquad + \frac{1}{h} \int_{\Omega^1}\Big\langle (E^h)_{13, 23},
\eta({y^h}')\Big\rangle ~\mbox{d}x,
\end{split}
\end{equation}
where we used the decomposition (\ref{RERA}). Now, exactly as in the
proof of Lemma \ref{lem7.1} and recalling the block structure of the
limiting tensor $A$ in (\ref{CAh}), we see that (\ref{pupu})
converges to:
\begin{equation*}
\begin{split}
&\int_{\Omega^1}\Big\langle \left(AE \right)_{13, 23},
\eta(x')\Big\rangle ~\mbox{d}x  + \int_{\Omega^1}\Big\langle \left(E
A^T\right)_{13, 23},
  \eta(x')\Big\rangle ~\mbox{d}x
+ \frac{1}{h} \int_{\Omega^1}\Big\langle (E^h)_{13, 23},
\eta({y^h}')\Big\rangle ~\mbox{d}x \\ & = \int_{\Omega^1}
\Big\langle \left(E'' -
  E_{33}\mbox{Id}_2\right)\nabla v , \eta(x')\Big\rangle ~\mbox{d}x
+ \frac{1}{h} \int_{\Omega^1}\Big\langle (E^h)_{13, 23},
\eta({y^h}')\Big\rangle ~\mbox{d}x.
\end{split}
\end{equation*}
In conclusion, passing to the limit in (\ref{bigeq4}) clearly yields
(\ref{Lbigeq4}) and achieves the lemma.
\end{proof}

\bigskip

\noindent {\bf Proof of the second Euler-Lagrange equation (\ref{EL2}).}\\
Let now $\xi\in\mathcal{C}^3_b(\mathbb{R}^2,\mathbb{R})$. Applying
Lemma \ref{lem7.1} with $\eta_3 = \xi$, and Lemma \ref{lem7.2} with
$\eta = \nabla\xi$, it follows:
\begin{equation}\label{krak1}
-\int_{\Omega^1}\Big\langle E'' - E_{33}\mbox{Id}_2 : (x_3 +
v(x')\nabla^2 \xi \Big\rangle ~ \mbox{d}x = \bar R_{33} \int_\Omega
f(x') \xi(x') ~ \mbox{d}x'.
\end{equation}
By the first Euler-Lagrange equation in (\ref{krak0}) applied with
$\eta= v\nabla\xi\in W^{2,2}(\Omega,\mathbb{R}^2)$, we see that:
\begin{equation*}
\int_{\Omega^1} \Big\langle E''- E_{33}\mbox{Id}_2 : \nabla v
\otimes \nabla \xi + v(x')\nabla^2 \xi\Big\rangle ~ \mbox{d}x = 0.
\end{equation*}
Thus, (\ref{krak1}) becomes:
\begin{equation*}
\int_{\Omega^1}  \Big\langle E'' - E_{33} \mbox{Id}_2 : \nabla
v\otimes \nabla\xi \Big\rangle ~\mbox{d}x- \int_{\Omega^1}
\Big\langle E'' - E_{33} \mbox{Id}_2 : x_3 \nabla^2 \xi \Big\rangle
~\mbox{d}x = \bar R_{33} \int_\Omega f(x') \xi(x').
\end{equation*}
The equality in  (\ref{EL2}) follows now from the above in view of
(\ref{aiuto}), and by the density of test functions
$\xi\in\mathcal{C}^3_b$ in $W^{2,2}(\Omega,\mathbb{R})$.
\endproof

\end{document}